\numberwithin{equation}{subsection}
\theoremstyle{plain}
\newtheorem{theo}{Theorem}
\newtheorem*{theo*}{Theorem}
\theoremstyle{definition}
\newtheorem{lem}{Lemma}[section]
\newtheorem{defi}[lem]{Definition}
\newtheorem{conj}[lem]{Conjecture}
\newtheorem{prob}[lem]{Problem}
\newtheorem{prop}[lem]{Proposition}
\theoremstyle{remark}
\newtheorem*{ack}{Acknowledgements}
\newtheorem*{rem*}{Remark}
\newtheorem{rem}[lem]{Reamrk}
\newtheorem{cor}[lem]{Corollary}
\begin{document}

\title{Busemann functions and barrier functions}
\author{Xiaojun Cui AND Jian Cheng}

\address{Xiaojun Cui \endgraf
Department of Mathematics, Nanjing University,
Nanjing 210093, Jiangsu Province,  People's
Republic of China.}
\email{xcui@nju.edu.cn}

\address{Jian Cheng\endgraf
Department of Mathematics, Nanjing University,
Nanjing 210093, Jiangsu Province,  People's
Republic of China.}
\email{jcheng@nju.edu.cn}


\keywords{Busemann function; Barrier function; Semi-concave; Singularity}
\subjclass[2010]{49L25, 53C22}
\thanks{The first author is supported by the National Natural Science Foundation of China
 (Grant 11271181). Both authors are supported by the Project Funded by the Priority Academic Program Development of Jiangsu Higher Education Institutions  (PAPD) and the Fundamental Research Funds for the Central Universities.}

 \abstract
We show that Busemann functions on a smooth, non-compact, complete, boundaryless, connected Riemannian manifold are viscosity solutions with respect to the Hamilton-Jacobi equation determined by the Riemannian metric and consequently they are locally semi-concave with linear modulus. We also analysis the structure of singularity sets of Busemann functions. Moreover we study  barrier functions, which are analogues to Mather's barrier functions in Mather theory, and provide some fundamental properties. Based on barrier functions, we could define some relations on the set of lines and thus classify them. We also discuss some initial relations with the ideal boundary of the Riemannian manifold.
\endabstract
\maketitle

\section*{Introduction}
Let $M$ be a smooth, non-compact, complete, boundaryless, connected  Riemannian manifold with Riemanian metric $g$. Let $d$ be the distance on $M$ and $| \cdot|_g$ the norm on the tangent bundle $TM$ and/or the contangent bundle $T^*M$  induced by this Riemannian metric $g$. Throughout this paper, all geodesic segments are always parameterized to be unit-speed.  By a ray, we mean a geodesic segment $\gamma:[0, +\infty) \rightarrow M$ such that $d(\gamma(t_1),\gamma(t_2))=|t_2-t_1|$ for any $t_1,t_2 \geq 0$. Throughout this paper, $| \cdot |$ mean Euclidean norms. To guarantee the existence of rays, $M$ must be non-compact. By definition, the Busemann function associated to a ray $\gamma$, is defined as
$$b_{\gamma}(x):=\lim_{t \rightarrow +\infty}[d(x, \gamma(t))-t].$$
Clearly, $b_{\gamma}$ is a Lipschitz function with Lipschitz constant 1, i.e.
$$|b_{\gamma}(x)-b_{\gamma}(y)| \leq d(x,y).$$

Busemann functions play an important role in the study of differential geometry. The geometrical features of the Riemannian metric will provide some properties of Busemann functions. For instance, in [\cite{Wu}, Theorem A(a)] ( also see [\cite{Sa}, Page 212, Proposition 3.1]) it is showed in our terminology that if $(M,g)$ is of non-negative sectional curvature, then $b_{\gamma}$ is a concave function on $M$ (i.e. $b_{\gamma}$ is a concave function restricted on any geodesic segment). If it is  of nonnegative Ricci curvature, then any Busemann function is superharmonic [\cite{Sa},
Page 218, Proposition 3.8], [\cite{Pe}, Page 287, Lemma 44]. There are also plenty of results on the properties of Busemann functions for the manifold of negative or non-positive curvature, or free of conjugate points or focal points (see for example \cite{Es}, \cite{HiH}).  Busemann functions are also tools  studying rigidity problem (e.g. \cite{BG}, \cite{BE1}, \cite{BE2}).

In this paper, we will study Busemann functions from the viewpoint of Mather theory. Comparing with the geometrical viewpoint, the setting in this paper are more general, at least no curvature assumptions are needed. Of course, this leads the results here are relatively rough. Our main aim is to provide a frame for the most general setting.

To state our main results, we need to recall the definition of semi-concavity with linear modulus.
\begin{defi}
Given an open subset $\Omega \subset \mathbb{R}^n$, a continuous function
$$u:\Omega \rightarrow \mathbb{R}$$
is called locally semi-concave with linear modulus if, for any open convex subset $\Omega^{\prime} \subset \Omega$ with compact support in $\Omega$ (i.e. $\Omega^{\prime} \Subset\Omega$), there exists a constant $C$ such that $u(x)-\frac{C}{2}|x|^2$ is a concave function in $\Omega^{\prime}$ (here, $| \cdot |$ is the Euclidean norm).
\end{defi}
We call the constant $C$ (depends on the choice of $\Omega^{\prime}$)  is the semi-concave constant.

For the functions defined on manifold $M$, we say
\begin{defi}
A continuous function $u:M \rightarrow \mathbb{R}$ is called locally semi-concave with linear modulus if, for any $x \in M$, there exist an open neighborhood $U$ and a smooth coordinate chart
$$\phi: U  \rightarrow \mathbb{R}^n,$$
such that the function $u \circ \phi^{-1}$ is  locally semi-concave on  $\phi(U)$.
\end{defi}

\begin{rem}
For two different charts $\phi_1,\phi_2$ both defined on $U$, $u \circ \phi^{-1}_1$ is locally semi-concave  with linear modulus if and only $u \circ \phi^{-1}_2$ is locally semi-concave  with linear modulus, although the semi-concave constants of $u \circ \phi^{-1}_1$ and of $u \circ \phi^{-1}_2 $ may be different. So the definition is well posed.
\end{rem}

\begin{rem}
For other (locally) semi-concave functions with more general modulus, a good reference is \cite{CS}.
\end{rem}
Let $\nabla$ be the gradient determined by the Riemannian metric $g$. Our first result, without any additional curvature restriction, is

\begin{theo}
Every Busemann function is  a (globally defined) viscosity solution of Hamilton-Jacobi equation $$|\nabla u|_g=1,$$
or equivalently
$$|du|_g^2=1.$$
Consequently, any Busemann function is locally semi-concave with linear modulus.
\end{theo}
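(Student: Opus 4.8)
The plan is to verify directly the two defining inequalities of a viscosity solution and then deduce the semi-concavity. Write $b:=b_{\gamma}$ and, for $t>0$, $b^{t}(x):=d(x,\gamma(t))-t$, so that $b=\lim_{t\to+\infty}b^{t}$. First I record the elementary facts needed throughout: each $b^{t}$ is $1$-Lipschitz; by the triangle inequality $t\mapsto b^{t}(x)$ is non-increasing; and $b^{t}(x)\geq -d(x,\gamma(0))$. Hence $b$ is well defined and $1$-Lipschitz, and since the continuous functions $b^{t}$ decrease to the continuous function $b$, Dini's theorem gives that $b^{t}\to b$ uniformly on compact subsets of $M$.

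\emph{Subsolution.} I claim any function $u$ that is $1$-Lipschitz with respect to $d$ is a viscosity subsolution of $|\nabla u|_{g}=1$. If $\varphi\in C^{1}$ and $u-\varphi$ has a local maximum at $x_{0}$, then for a unit vector $v\in T_{x_{0}}M$ and small $s>0$ the geodesic $c(s):=\exp_{x_{0}}(sv)$ satisfies $\varphi(c(s))-\varphi(x_{0})\geq u(c(s))-u(x_{0})\geq -d(c(s),x_{0})\geq -s$; dividing by $s$ and letting $s\to0^{+}$ gives $d\varphi_{x_{0}}(v)\geq-1$, and replacing $v$ by $-v$ gives $|d\varphi_{x_{0}}(v)|\leq1$ for every unit $v$, i.e.\ $|\nabla\varphi(x_{0})|_{g}\leq1$.

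\emph{Supersolution --- the main step.} Here I invoke asymptotic rays. Fix $x_{0}$, pick $t_{n}\to+\infty$ and unit-speed minimizing geodesics $\sigma_{n}$ from $x_{0}$ to $\gamma(t_{n})$; by completeness and Arzel\`a--Ascoli a subsequence of the initial velocities $\dot\sigma_{n}(0)$ converges to some unit vector, and the geodesic $\sigma$ it determines is a ray with $\sigma(0)=x_{0}$ (its finite subsegments are limits of minimizing segments). The crucial identity is $b(\sigma(s))=b(x_{0})-s$ for all $s\geq 0$: ``$\geq$'' is immediate from $1$-Lipschitzness, while for ``$\leq$'' we use that $\sigma_{n}$ is minimizing, so for $s\leq d(x_{0},\gamma(t_{n}))$,
\[
b^{t_{n}}(\sigma_{n}(s))=d(\sigma_{n}(s),\gamma(t_{n}))-t_{n}=\bigl(d(x_{0},\gamma(t_{n}))-s\bigr)-t_{n}=b^{t_{n}}(x_{0})-s ,
\]
and letting $n\to\infty$, using $\sigma_{n}(s)\to\sigma(s)$, the locally uniform convergence $b^{t_{n}}\to b$, and $\mathrm{Lip}(b)\leq1$, gives the claim. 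Now if $\varphi\in C^{1}$ and $b-\varphi$ has a local minimum at $x_{0}$, then for small $s>0$ we get $\varphi(x_{0})-\varphi(\sigma(s))\geq b(x_{0})-b(\sigma(s))=s$; dividing by $s$ and letting $s\to0^{+}$ yields $d\varphi_{x_{0}}(\dot\sigma(0))\leq-1$, hence $|\nabla\varphi(x_{0})|_{g}\geq1$ since $\dot\sigma(0)$ is a unit vector. Together with the subsolution part this shows $b$ is a (global) viscosity solution of $|\nabla u|_{g}=1$.

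\emph{Semi-concavity.} Since the Hamiltonian $H(x,p)=|p|_{g}^{2}-1$ is smooth and strictly convex in $p$ (its fiberwise Hessian is $2g^{-1}>0$), the regularity theory for viscosity solutions of such Hamilton--Jacobi equations applies and gives that $b$ is locally semi-concave with linear modulus; see \cite{CS}. Alternatively, one argues directly from the rays constructed above: fixing $x_{0}$ and a small $s_{0}>0$ for which the injectivity radius stays bounded below on a ball about $x_{0}$, the inequality $b(\cdot)\leq d(\cdot,\sigma_{y}(s_{0}))+b(y)-s_{0}$, valid for every nearby $y$ with equality at $y$, exhibits a family of smooth upper support functions for $b$ whose Hessians are uniformly bounded above (the points $\sigma_{y}(s_{0})$ range in a fixed compact set, staying at distance $\approx s_{0}$ from the relevant arguments, hence off the corresponding cut loci); a function admitting such support functions at every point of a neighbourhood is locally semi-concave with linear modulus, and this passes to any coordinate chart.

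The step I expect to be the main obstacle is the identity $b(\sigma(s))=b(x_{0})-s$, i.e.\ that from every point there issues a ray along which the Busemann function decreases at unit speed. Granting Dini's theorem its proof is short, but this is the geometric heart of the argument and is exactly what lets us dispense with all curvature hypotheses.
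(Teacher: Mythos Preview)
Your argument is correct, but the route differs from the paper's. The paper relies on two black boxes: that the distance function $d(\cdot,K)$ is already a viscosity solution of $|\nabla u|_g=1$ on $M\setminus K$ (Proposition~1.1, from \cite{MM}), so each $b^t=d(\cdot,\gamma(t))-t$ is a viscosity solution on any precompact open set $U$ once $t$ is large enough that $\gamma(t)\notin\bar U$; and that viscosity solutions are stable under locally uniform limits \cite{CS}, so $b=\lim b^t$ inherits the property on $U$, hence globally. You instead verify the two viscosity inequalities by hand: the subsolution side from the $1$-Lipschitz bound, and the supersolution side by constructing a coray $\sigma$ through an arbitrary $x_0$ and proving the calibration identity $b(\sigma(s))=b(x_0)-s$. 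Both routes finish semi-concavity via the same convex-Hamiltonian regularity theorem in \cite{CS}, though your upper-support-function alternative is a pleasant self-contained substitute. The paper's approach buys brevity and modularity; yours buys independence from the stability theorem and produces, as a byproduct, the existence through every point of a ray along which $b$ decreases at unit speed---essentially one direction of Lemma~0.7, which the paper invokes separately anyway.
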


\begin{rem}
The definition of viscosity solution is standard, which may be found in \cite{Li}, \cite{CS}, \cite{Fa} for example. The result of Theorem 1 is very fundamental and its proof is also quite simple, but we  could not find a literature containing such a result.
\end{rem}
\begin{rem}
Obviously, there exist such functions which are viscosity solutions but not  Busemann functions. For example on $\mathbb{R}$, $-|x|$ is a viscosity solution, but is not a Busemann function for any ray.
\end{rem}

For a ray $\gamma:[0, \infty)\rightarrow M$ and any point $x \in M$, a geodesic segment $\gamma^{\prime}:[0, +\infty) \rightarrow M$ is said to be a coray initiated from $x$ to $\gamma$ if $\gamma^{\prime}(0)=x$ and there exist a sequence $t_i \rightarrow \infty$ and a sequence of minimal geodesic segments
$$\gamma_i:[0,T_i] \rightarrow M$$
with $\gamma_i(0) \rightarrow x$ and $\gamma_i(T_i)=\gamma(t_i)$ such that $\gamma_i \rightarrow \gamma^{\prime}$ uniformly on any compact interval of $\mathbb{R}^+$. Clearly, any coray is itself a ray. The following  lemma (for a proof, see for example [\cite{BG}, Proposition 2.7]) is very useful:
 \begin{lem}
 A ray $\gamma^{\prime}$ is a coray to $\gamma$ if and only if $b_{\gamma}(\gamma^{\prime}(t_2))-b_{\gamma}(\gamma^{\prime}(t_1))=t_1-t_2$ for any $t_2,t_1 \in \mathbb{R}^+$.
  \end{lem}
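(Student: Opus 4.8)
The plan is to first reduce the stated condition, quantified over all $t_1,t_2\in\mathbb{R}^+$, to the single identity $b_{\gamma}(\gamma'(s))=b_{\gamma}(\gamma'(0))-s$ for all $s\ge 0$ (put $t_1=0$; conversely subtract two instances), and then prove the two implications separately. Throughout I write $x=\gamma'(0)$ and use only that $b_{\gamma}$ is $1$-Lipschitz and that $d(z,\gamma(t))-t\to b_{\gamma}(z)$ for each fixed $z$.

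For the direction ``coray $\Rightarrow$ identity'' I would argue by a sandwich estimate. Let $t_i\to\infty$ and let $\gamma_i\colon[0,T_i]\to M$ be minimal geodesic segments with $\gamma_i(0)\to x$, $\gamma_i(T_i)=\gamma(t_i)$ and $\gamma_i\to\gamma'$ uniformly on compact sets; note $T_i=d(\gamma_i(0),\gamma(t_i))\to\infty$ since $d(x,\gamma(t_i))\ge t_i-d(x,\gamma(0))$. Fix $s\ge 0$. For $i$ large $\gamma_i(s)$ is defined, $d(\gamma_i(0),\gamma_i(s))=s$ and $d(\gamma_i(s),\gamma(t_i))=T_i-s$; using the triangle inequality and the fact that $\gamma$ is a ray, $d(\gamma_i(s),\gamma(t))-t\le T_i-s-t_i$ for all $t\ge t_i$, hence $b_{\gamma}(\gamma_i(s))\le T_i-s-t_i$, while the Lipschitz bound gives $b_{\gamma}(\gamma_i(s))\ge b_{\gamma}(\gamma_i(0))-s$. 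Since $|d(\gamma_i(0),\gamma(t_i))-d(x,\gamma(t_i))|\le d(\gamma_i(0),x)\to 0$, one has $T_i-t_i=d(x,\gamma(t_i))-t_i+o(1)\to b_{\gamma}(x)$; letting $i\to\infty$ and using continuity of $b_{\gamma}$ (together with $\gamma_i(s)\to\gamma'(s)$, $\gamma_i(0)\to x$) pins $b_{\gamma}(\gamma'(s))$ between $b_{\gamma}(x)-s$ and $b_{\gamma}(x)-s$, giving the identity.

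For the converse I would \emph{not} try to recover $\gamma'$ as an arbitrary limit of minimizers from $x$ to $\gamma(t_i)$ (corays need not be unique, so this can fail), but instead use a calibration argument to force minimizers to accumulate on $\gamma'$. Fix $s>0$, choose $t_k\to\infty$, let $\beta_k$ be a minimal geodesic segment from $\gamma'(s)$ to $\gamma(t_k)$, and extract (Arzel\`a--Ascoli: the $\beta_k$ are unit speed, start at $\gamma'(s)$, and have lengths tending to $\infty$) a subsequential limit $\rho_s$, uniform on compact sets, which is by construction a coray from $\gamma'(s)$ to $\gamma$; by the implication just proved, $b_{\gamma}(\rho_s(u))=b_{\gamma}(\gamma'(s))-u=b_{\gamma}(x)-s-u$. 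Then the unit-speed curve $c_s$ equal to $\gamma'$ on $[0,s]$ and to $u\mapsto\rho_s(u-s)$ on $[s,\infty)$ satisfies $b_{\gamma}(c_s(u))=b_{\gamma}(x)-u$ for all $u\ge 0$; since $b_{\gamma}$ is $1$-Lipschitz, $\mathrm{length}(c_s|_{[0,u]})=u\ge d(c_s(0),c_s(u))\ge b_{\gamma}(c_s(0))-b_{\gamma}(c_s(u))=u$, so each $c_s|_{[0,u]}$ is a length-minimizing curve and hence a smooth geodesic. In particular $c_s$ has no corner at $s$, so $\dot\rho_s(0)=\dot\gamma'(s)$ and, by uniqueness of geodesics, $\rho_s(u)=\gamma'(s+u)$. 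Thus for every $s>0$ there is a sequence of minimal geodesic segments from $\gamma'(s)$ to points $\gamma(t_k)\to\infty$ converging uniformly on compact sets to $u\mapsto\gamma'(s+u)$; a diagonal choice $s=1/i$, with $t_i\to\infty$ picked so that the $i$-th segment is within $1/i$ of $u\mapsto\gamma'(1/i+u)$ on $[0,i]$, produces minimal geodesic segments $\gamma_i$ from $\gamma'(1/i)\to x$ to $\gamma(t_i)$ with $\gamma_i\to\gamma'$ uniformly on compact sets, which is exactly the definition of a coray.

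The main obstacle is this converse direction: one must upgrade the purely metric hypothesis ``$b_{\gamma}$ decreases at unit speed along $\gamma'$'' to the geometric statement that $\gamma'$ is approximated by minimizing geodesics running off to infinity along $\gamma$. The calibration/no-corner step is what makes this work with no curvature hypothesis (it uses only $1$-Lipschitzness of $b_{\gamma}$ and that length-minimizing curves are geodesics), and the diagonalization has to be arranged so that the base points tend to $x$ while the segments still converge to $\gamma'$ on every compact interval; the rest is routine.
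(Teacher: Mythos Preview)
The paper does not prove this lemma at all: it simply records the statement and refers the reader to \cite{BG}, Proposition~2.7. There is therefore no in-paper argument to compare yours against.

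Your proof is correct and self-contained. The forward direction is the standard sandwich: from $d(\gamma_i(s),\gamma(t_i))=T_i-s$ and the $1$-Lipschitz bound you pin $b_\gamma(\gamma_i(s))$ between $b_\gamma(\gamma_i(0))-s$ and $T_i-s-t_i$, and both tend to $b_\gamma(x)-s$. For the converse, your key observation---that one cannot simply take limits of minimizers issued from $x$ because corays from $x$ need not be unique---is exactly the subtlety, and your calibration/no-corner argument handles it cleanly: since $b_\gamma$ drops at unit speed along the concatenation $c_s$, the $1$-Lipschitz inequality forces $c_s$ to be length-minimizing on every initial segment, hence a smooth geodesic, so $\rho_s$ must agree with $\gamma'$ beyond $s$. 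The diagonalization with base points $\gamma'(1/i)\to x$ then matches the paper's definition of coray verbatim. One small remark: after extracting a subsequential limit $\rho_s$, your argument actually shows that \emph{every} subsequential limit equals $u\mapsto\gamma'(s+u)$, so the full sequence converges; this is implicit in your diagonal step and worth stating, but it is routine.

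In short, you have supplied a complete proof where the paper only cites one; nothing needs fixing.
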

  Note that, if $\gamma^{\prime}$ is a coray to $\gamma$, $\gamma$ is not a co-ray to $\gamma^{\prime}$ in general. Additionally two distinct rays $\gamma$ and $\gamma^{\prime}$ initiated from the same point $x$ may define the same Busemann functions, i.e. $b_{\gamma}=b_{\gamma^{\prime}}$. For more information on Busemann functions, we refer to \cite{Bu}.

Let singular set $sing(b_{\gamma})$ be the set of non-differentiable points of $b_{\gamma}$. Let $C_{\gamma}$ be the set of points from which at least two corays initiated. In [\cite{In}, Theorem 12] it is proved, among other results, that $C_{\gamma} \subseteq sing(b_{\gamma})$. As a  direct application of Theorem 1, we obtain
\begin{theo}
$C_{\gamma}=sing(b_{\gamma})$.
\end{theo}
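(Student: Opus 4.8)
Since the inclusion $C_\gamma\subseteq sing(b_\gamma)$ is already supplied by [\cite{In}, Theorem 12], the plan is to prove only the reverse inclusion $sing(b_\gamma)\subseteq C_\gamma$; that is, I would show that whenever $b_\gamma$ is not differentiable at $x$, at least two distinct corays to $\gamma$ issue from $x$. The starting point is Theorem 1, which makes $b_\gamma$ locally semi-concave with linear modulus, so I would invoke the standard structure theory of semi-concave functions (see \cite{CS}): at each $x$ the superdifferential $D^+b_\gamma(x)$ is a nonempty compact convex subset of $T^*_xM$, the set of reachable gradients
$$D^*b_\gamma(x):=\{\,p=\lim_{n\to\infty} db_\gamma(x_n)\ :\ x_n\to x,\ b_\gamma\text{ differentiable at }x_n\,\}$$
is nonempty, and $b_\gamma$ is differentiable at $x$ precisely when $D^*b_\gamma(x)$ consists of a single covector. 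Using also that $b_\gamma$ is a viscosity solution of $|du|_g=1$ (Theorem 1), hence satisfies the equation classically at every differentiability point, I would record that $|p|_g=1$ for every $p\in D^*b_\gamma(x)$.

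The second step identifies the corays emanating from a differentiability point. Fixing a differentiability point $y$ of $b_\gamma$, there always exists a coray $\gamma'$ to $\gamma$ with $\gamma'(0)=y$, obtained as a sub-sequential limit of minimal geodesic segments joining $y$ to $\gamma(t_i)$ with $t_i\to+\infty$. Writing $w:=\dot\gamma'(0)$ (so $|w|_g=1$), the coray characterization lemma recalled above gives $b_\gamma(\gamma'(t))=b_\gamma(y)-t$ for all $t\ge 0$; differentiating this at $t=0^+$ and using differentiability of $b_\gamma$ at $y$ yields $\langle db_\gamma(y),w\rangle=-1$, and since $|db_\gamma(y)|_g=|w|_g=1$ the equality case of the Cauchy--Schwarz inequality forces $w=-\nabla b_\gamma(y)$. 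As a geodesic is determined by its initial position and velocity, I would conclude that the coray from $y$ is unique, equal to $c_y(t):=\exp_y(-t\,\nabla b_\gamma(y))$, and that $b_\gamma(c_y(t))=b_\gamma(y)-t$ for all $t\ge 0$.

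For the last step, suppose $x\in sing(b_\gamma)$; then $D^*b_\gamma(x)$ contains two distinct unit covectors $p_1\ne p_2$. I would choose, for $i=1,2$, differentiability points $x^{(i)}_n\to x$ with $\nabla b_\gamma(x^{(i)}_n)\to p_i^{\sharp}$; by the previous step each $c_{x^{(i)}_n}$ is a coray to $\gamma$ satisfying $b_\gamma(c_{x^{(i)}_n}(t))=b_\gamma(x^{(i)}_n)-t$. Using completeness of $M$ and the smooth dependence of the geodesic flow on its initial data, $c_{x^{(i)}_n}\to\gamma_i$ uniformly on compact subsets of $[0,+\infty)$, where $\gamma_i(t):=\exp_x(-t\,p_i^{\sharp})$, and passing to the limit together with the continuity of $b_\gamma$ gives $b_\gamma(\gamma_i(t))=b_\gamma(x)-t$ for all $t\ge 0$. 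Then $d(\gamma_i(0),\gamma_i(t))\ge|b_\gamma(\gamma_i(t))-b_\gamma(x)|=t$ forces the unit-speed geodesic $\gamma_i$ to be a ray, hence a coray to $\gamma$ from $x$ by the characterization lemma; and $p_1\ne p_2$ gives $\dot\gamma_1(0)=-p_1^{\sharp}\ne-p_2^{\sharp}=\dot\gamma_2(0)$, so $\gamma_1$ and $\gamma_2$ are distinct corays issuing from $x$ and $x\in C_\gamma$. Combined with Innami's inclusion, this would yield $C_\gamma=sing(b_\gamma)$.

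I expect the main obstacle to be this final limiting argument: one must make sure that the corays attached to the approximating differentiability points converge to a genuine coray issuing from $x$, which is where completeness of $M$, continuous dependence of geodesics on initial conditions, continuity of $b_\gamma$, and the coray characterization lemma all enter. The remaining ingredients are immediate consequences of Theorem 1 and the standard facts about locally semi-concave functions.
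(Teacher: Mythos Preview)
Your proposal is correct and follows essentially the same route as the paper: use Theorem 1 to get local semi-concavity, pass to reachable gradients $D^*b_\gamma(x)$, observe that non-differentiability forces $D^*b_\gamma(x)$ to contain at least two unit covectors, and produce a coray from $x$ in the direction of $-p^\sharp$ for each $p\in D^*b_\gamma(x)$ by taking limits of the (unique) corays issuing from nearby differentiability points. The paper packages this last step as a separate lemma (Lemma 2.1) and cites Innami (or the viscosity-solution theory in \cite{CS}) for the uniqueness of the coray at a differentiability point, whereas you re-derive that uniqueness from the Cauchy--Schwarz equality case and the coray characterization lemma, and you also spell out explicitly why the limiting geodesic is a ray and hence a coray via the identity $b_\gamma(\gamma_i(t))=b_\gamma(x)-t$; these are elaborations rather than a different argument.
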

Now we introduce a more restrictive notation than ray. A geodesic $\gamma: \mathbb{R} \rightarrow M$ is called to be a line if
$$d(\gamma(t_1), \gamma(t_2))=|t_2-t_1|$$
for any $t_2,t_1 \in \mathbb{R}.$ On any non-compact complete Riemannian manifold $M$, for any point $x$ on it, there always exists at least one ray initiated from $x$. Comparing with rays, lines are much more rare  and there exist examples of non-compact complete Riemannian manifolds containing no line at all. Given a line $\gamma:\mathbb{R} \rightarrow M$, we can define the barrier function $B_{\gamma}$, under the motivation of Mather theory, to be
$$B_{\gamma}(x)=b_{\gamma^{+}}(x)+b_{\gamma^{-}}(x),$$
where $\gamma^{+},\gamma^{-}$ are two rays defined by
$\gamma^{\pm}:[0, +\infty) \rightarrow M, \gamma^{\pm}(t)=\gamma(\pm t)$. It should be mentioned that such kind of barrier functions has appeared in literature (e.g. [\cite{Pe}, Page 287, Line -15--Line-10],   [\cite{Sa}, Page 218, Line -3]) in the context of non-negative Ricci curvature. But we think that the idea here, studying this function in the most general case from the viewpoint of Mather theory, is new.

For any line $\gamma:\mathbb{R} \rightarrow M$, the $\tau$-translation of $\gamma$ is  defined by  $\gamma_{\tau}(t):=\gamma(t+\tau)$ (here $\tau \in \mathbb{R}$) and $- \gamma$ is defined by $-\gamma(t):=\gamma(-t)$,  for any $t \in \mathbb{R}$.
 \begin{lem}
 1)$B_{\gamma}$ is independent of time translation, i.e. $B_{\gamma_{\tau}}=B_{\gamma}$ for any $\tau \in \mathbb{R}$.

 2)$B_{\gamma} \geq 0$, $B_{\gamma}|_{\gamma}\equiv 0$ and $B_{-\gamma}=B_{\gamma}$.
 \end{lem}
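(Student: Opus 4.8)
The plan is to establish the three assertions in order, drawing on two facts already available: the translation/scaling behaviour of Busemann functions and Lemma 1.1 characterizing corays. For part 1), I would first check the elementary identity $b_{\gamma_\tau^+} = b_{\gamma^+} - \tau$ and $b_{\gamma_\tau^-} = b_{\gamma^-} + \tau$. Indeed, unwinding the definition, for the ray $\gamma^+_\tau(t) = \gamma^+(t+\tau) = \gamma(t+\tau)$ (assuming $\tau \ge 0$, say, so that this is genuinely a sub-ray of $\gamma^+$; the general case follows by splitting the limit or by a two-sided argument),
\[
b_{\gamma_\tau^+}(x) = \lim_{t\to\infty}\bigl[d(x,\gamma(t+\tau)) - t\bigr] = \lim_{s\to\infty}\bigl[d(x,\gamma(s)) - s\bigr] + \tau = b_{\gamma^+}(x) + (-\tau) \cdot (-1),
\]
so that $b_{\gamma_\tau^+}=b_{\gamma^+}-\tau$; symmetrically $b_{\gamma_\tau^-}=b_{\gamma^-}+\tau$ since the negative ray of $\gamma_\tau$ traverses $\gamma$ in the opposite direction with a shift of $-\tau$. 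Adding the two identities, the $\pm\tau$ cancels and $B_{\gamma_\tau} = b_{\gamma_\tau^+} + b_{\gamma_\tau^-} = b_{\gamma^+} + b_{\gamma^-} = B_\gamma$.

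For part 2), the inequality $B_\gamma \ge 0$ is the triangle inequality in the limit: for any $x$ and any $s,t>0$,
\[
d(x,\gamma(t)) + d(x,\gamma(-s)) \ge d(\gamma(t),\gamma(-s)) = t+s,
\]
where the last equality is precisely the line condition. Rearranging gives $[d(x,\gamma(t))-t] + [d(x,\gamma(-s))-s] \ge 0$, and letting $t,s\to\infty$ yields $b_{\gamma^+}(x)+b_{\gamma^-}(x) = B_\gamma(x)\ge 0$. For the vanishing on $\gamma$, observe that $\gamma^+$ is trivially a coray to itself, and likewise $\gamma^+$ restricted to $[t_0,\infty)$ has the coray property, so Lemma 1.1 gives $b_{\gamma^+}(\gamma(t)) = b_{\gamma^+}(\gamma(0)) - t$; similarly $b_{\gamma^-}(\gamma(t)) = b_{\gamma^-}(\gamma(0)) + t$ by applying the same reasoning to $\gamma^-$ at the point $\gamma(-t)$. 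Since $b_{\gamma^+}(\gamma(0)) = b_{\gamma^-}(\gamma(0)) = 0$ directly from the definitions (the limit $\lim_{t\to\infty}[d(\gamma(0),\gamma(t))-t] = \lim_{t\to\infty}[t-t]=0$), summing gives $B_\gamma(\gamma(t)) = 0$ for all $t$. Finally $B_{-\gamma} = B_\gamma$ is immediate: the positive ray of $-\gamma$ is $\gamma^-$ and the negative ray of $-\gamma$ is $\gamma^+$, so $B_{-\gamma} = b_{(-\gamma)^+} + b_{(-\gamma)^-} = b_{\gamma^-} + b_{\gamma^+} = B_\gamma$.

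The argument is essentially routine; the only point requiring a little care is handling the sign of $\tau$ in part 1) when $\tau < 0$, where $\gamma^+_\tau$ is no longer a sub-ray of $\gamma^+$ but extends slightly further back. I would resolve this either by noting that both sides of the claimed identity depend continuously (indeed affinely) on $\tau$ and agree for $\tau \ge 0$, hence agree everywhere, or simply by replacing $t$ with $t-\tau$ inside the limit defining $b_{\gamma^+_\tau}$, which is legitimate for all $\tau\in\mathbb{R}$ once $t$ is large. I do not expect any genuine obstacle here.
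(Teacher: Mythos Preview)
Your approach is essentially the same as the paper's: compute how the two Busemann functions shift under $\tau$-translation and observe the shifts cancel, then invoke the triangle inequality for $B_\gamma\ge 0$ and direct inspection for the remaining claims. The paper's proof is terser but identical in substance.

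One slip to fix: your own displayed computation correctly gives
\[
b_{\gamma_\tau^+}(x)=\lim_{s\to\infty}[d(x,\gamma(s))-s]+\tau=b_{\gamma^+}(x)+\tau,
\]
yet in the next sentence you write $b_{\gamma_\tau^+}=b_{\gamma^+}-\tau$ (and correspondingly $b_{\gamma_\tau^-}=b_{\gamma^-}+\tau$). The signs should be $+\tau$ and $-\tau$ respectively, matching the paper. This is harmless for the conclusion since the two shifts cancel in $B_{\gamma_\tau}$ either way, but the intermediate formulas as stated are wrong. Also, your discussion of the ``$\tau<0$'' issue is unnecessary: the substitution $s=t+\tau$ inside the limit is valid for every real $\tau$, so no separate continuity argument is needed.
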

 \begin{proof}
 1) Since $b_{\gamma^+_{\tau}}=b_{\gamma^+}+\tau$ and $b_{\gamma^{-}_{\tau}}=b_{\gamma^-}-\tau$, the assertion follows.

 2) Since $\gamma$ is a line, the first statement follows  obviously from the triangle inequality. The other two assertions also hold  obviously.
 \end{proof}
 Let $G_{\gamma}=\{x:B_{\gamma}(x)=0\}$. Then, we have
 \begin{theo}
1) $B_{\gamma}$ is locally semi-concave with linear modulus.

 2) For $x \in G_{\gamma}$, there exists a unique line $\gamma^{\prime}$ through $x$ (i.e. $\gamma^{\prime}(0)=x$), such that $\gamma^{\prime}\subseteq G_{\gamma}$ and for any $\tau \in \mathbb{R}$, ${\gamma^{\prime}}^+_{\tau}$ is a coray to $\gamma^+$, ${\gamma^{\prime}}^-_{\tau}$ is a coray to $\gamma^-$. In other words, $G_{\gamma}$ is foliated by such kind of lines.

 3) For any line $\gamma$ and any compact subset $K$, there exists a constant $C$ (depends on $K$) such that $$B_{\gamma}(x) \leq Cd^2(x, \gamma)$$
 for any $x \in K$.

 \end{theo}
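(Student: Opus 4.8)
The plan is to treat the three parts in order, the second being by far the substantial one. \emph{Part 1} is immediate from Theorem 1: it gives that $b_{\gamma^+}$ and $b_{\gamma^-}$ are each locally semi-concave with linear modulus, and one only needs that this class is closed under addition (if $u_i-\tfrac{C_i}{2}|x|^2$ is concave on a convex set for $i=1,2$, then $(u_1+u_2)-\tfrac{C_1+C_2}{2}|x|^2$ is concave there), so $B_\gamma=b_{\gamma^+}+b_{\gamma^-}$ inherits the regularity. For \emph{Part 2, existence}, fix $x\in G_\gamma$, so $b_{\gamma^+}(x)+b_{\gamma^-}(x)=0$. I would pick a coray $\sigma^+$ from $x$ to $\gamma^+$ and a coray $\sigma^-$ from $x$ to $\gamma^-$ (corays to a given ray always exist, cf. \cite{BG}) and set $\gamma'(t)=\sigma^+(t)$ for $t\ge 0$, $\gamma'(t)=\sigma^-(-t)$ for $t\le 0$, a unit-speed piecewise geodesic. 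The core step is to check $\gamma'$ is a \emph{line}. For arguments of the same sign this is clear from $\sigma^\pm$ being rays. For $t_1\le 0\le t_2$, writing $a=-t_1,\ b=t_2$, the triangle inequality through $x$ gives $d(\sigma^-(a),\sigma^+(b))\le a+b$; for the reverse one uses that $b_{\gamma^+}$ is $1$-Lipschitz, the coray identities $b_{\gamma^+}(\sigma^+(b))=b_{\gamma^+}(x)-b$ and $b_{\gamma^-}(\sigma^-(a))=b_{\gamma^-}(x)-a$, and crucially $B_\gamma\ge 0$, which yields
\[
b_{\gamma^+}(\sigma^-(a))\ \ge\ -b_{\gamma^-}(\sigma^-(a))\ =\ -b_{\gamma^-}(x)+a\ =\ b_{\gamma^+}(x)+a ,
\]
so $d(\sigma^-(a),\sigma^+(b))\ge b_{\gamma^+}(\sigma^-(a))-b_{\gamma^+}(\sigma^+(b))\ge a+b$. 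Once $\gamma'$ is a line, the same pair of estimates (the Lipschitz bound above, the displayed bound below) forces $b_{\gamma^+}(\gamma'(t))=b_{\gamma^+}(x)-t$ and $b_{\gamma^-}(\gamma'(t))=b_{\gamma^-}(x)+t$ for all $t\in\mathbb{R}$; adding these gives $B_\gamma\equiv 0$ along $\gamma'$, and substituting the affine formulas into the coray characterization (the Lemma recalled above) shows ${\gamma'}^+_\tau$, resp. ${\gamma'}^-_\tau$, is a coray to $\gamma^+$, resp. $\gamma^-$, for every $\tau$.

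For \emph{Part 2, uniqueness and the foliation}, I would invoke a small general fact: a locally semi-concave function is differentiable at every local minimum (its superdifferential is nonempty, its subdifferential contains $0$, and these two together force differentiability; see \cite{CS}). Since $B_\gamma\ge 0$ and $B_\gamma(x)=0$, this applies at $x$ and $dB_\gamma(x)=0$. From $D^+b_{\gamma^+}(x)+D^+b_{\gamma^-}(x)\subseteq D^+B_\gamma(x)=\{0\}$ with both superdifferentials on the left nonempty, each must reduce to a point, and a semi-concave function with one-point superdifferential is differentiable there; hence $b_{\gamma^+}$ and $b_{\gamma^-}$ are differentiable at $x$. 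By Theorem 2, $x\notin sing(b_{\gamma^+})=C_{\gamma^+}$, so there is exactly one coray from $x$ to $\gamma^+$ and exactly one to $\gamma^-$; thus the line constructed above is the only line through $x$ with the stated coray property. Performing this construction at every point of $G_\gamma$ and using the uniqueness just proved (two such lines sharing a point coincide) exhibits $G_\gamma$ as a disjoint union of such lines.

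For \emph{Part 3}, I would first dispose of the easy regime: $B_\gamma$ is bounded on the compact set $K$, so for $x\in K$ with $d(x,\gamma)\ge\varepsilon_0$ the bound $B_\gamma(x)\le C\,d^2(x,\gamma)$ holds for $C$ large. It remains to handle $x$ with $d(x,\gamma)<\varepsilon_0$ for a small $\varepsilon_0$ to be fixed. For such $x$ there is a nearest point $y\in\gamma$ (the image of $\gamma$ is closed in the complete manifold $M$), and $y$ lies in a fixed compact neighborhood $K'$ of $K$; covering $K'$ by finitely many charts and taking $\varepsilon_0$ smaller than a Lebesgue number of this cover for $d$ ensures $x$ and $y$ lie in a common chart $\phi_i$. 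Since $B_\gamma(y)=0$ is a minimum, $B_\gamma\circ\phi_i^{-1}$ is differentiable at $\phi_i(y)$ with vanishing differential (the same fact as above), so the defining semi-concavity inequality ``below the tangent'' reads $B_\gamma(x)\le\tfrac{C_i}{2}\,|\phi_i(x)-\phi_i(y)|^2$; comparing the Euclidean chart distance with $d$ on the relevant compact set ($|\phi_i(x)-\phi_i(y)|\le L_i\,d(x,y)=L_i\,d(x,\gamma)$) and taking the maximum of the resulting constants (enlarged to cover the easy regime) yields the claim.

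The main obstacle is the construction in Part 2 — verifying that the concatenation of the two corays is a genuine line, which is precisely the step where the nonnegativity $B_\gamma\ge 0$ is indispensable — together with the observation, reused in Part 3, that local semi-concavity forces differentiability at minima: this is what converts the vanishing of $B_\gamma$ (on $\gamma$, resp. on $G_\gamma$) into the differentiability of $b_{\gamma^\pm}$ needed both for uniqueness of the foliating line and for the quadratic estimate. The remaining difficulties are routine bookkeeping: closure of the semi-concave class under sums, and a Lebesgue-number together with a bi-Lipschitz-chart comparison in Part 3.
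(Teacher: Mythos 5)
Your proposal is correct and follows essentially the same route as the paper: sum-closure of linear-modulus semi-concavity for part 1, differentiability of $b_{\gamma^{\pm}}$ at points of $G_{\gamma}$ via $D^+_xB_{\gamma}=\{0\}$ and $D^+_xB_{\gamma}\supseteq D^+_xb_{\gamma^+}+D^+_xb_{\gamma^-}$ for the uniqueness/foliation statement, and the quadratic semi-concavity inequality at foot points (where $B_{\gamma}$ attains its minimum with vanishing differential) combined with a compactness argument for part 3. The only, essentially equivalent, variation is in verifying that the concatenated corays form a line: you obtain the lower bound $d(\sigma^-(a),\sigma^+(b))\ge a+b$ directly from the $1$-Lipschitz property of $b_{\gamma^+}$ together with $B_{\gamma}\ge 0$, whereas the paper argues by contradiction that a shorter connecting segment would yield an intermediate point with $B_{\gamma}<0$.
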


 \begin{rem}
 The third assertion of Theorem 3 is a variant version of Mather's result [\cite{Mat2}, Page 1375, Line 19-- Line 25] on some kind of barrier functions for positive definite Lagrangian systems.
 \end{rem}
 We denote by $\gamma^{\prime} \prec \gamma$ if $B_{\gamma}(\gamma^{\prime}(0))=0$ and $\gamma^{\prime}$ is the unique line through $\gamma^{\prime}(0)$ satisfying the second assertion of Theorem 3. In fact, $B_{\gamma}\gamma^{\prime}\equiv 0$ in this case.  We say that $\gamma^{\prime} \sim \gamma$ if $\gamma^{\prime} \prec \gamma$ and $\gamma \prec \gamma^{\prime}$.
\begin{theo}
$\prec$ is a transitive relation and thus $\sim$ is an equivalence relation. For two lines $\gamma_1, \gamma_2$ with $\gamma_1 \prec \gamma, \gamma_2 \prec \gamma$, $\gamma_1 \sim \gamma_2$ if and only if $B_{\gamma_1}=B_{\gamma_2}$.
\end{theo}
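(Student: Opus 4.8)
The whole argument runs off one comparison principle, which I would establish first: \emph{if $\alpha\prec\beta$ then $B_\beta\le B_\alpha$ everywhere on $M$}. By definition $\alpha\prec\beta$ puts $\alpha(0)\in G_\beta$ and forces, through the lemma characterizing corays, $b_{\beta^{+}}$ to be affine of slope $-1$ and $b_{\beta^{-}}$ affine of slope $+1$ along $\alpha$, so that $b_{\beta^{+}}(\alpha(s))=b_{\beta^{+}}(\alpha(0))-s$ and $b_{\beta^{-}}(\alpha(-s))=b_{\beta^{-}}(\alpha(0))-s$ for every $s$. Substituting $y=\alpha(\pm s)$ into the $1$-Lipschitz bound $b_{\beta^{\pm}}(x)\le d(x,y)+b_{\beta^{\pm}}(y)$ and letting $s\to+\infty$ yields $b_{\beta^{+}}(x)\le b_{\alpha^{+}}(x)+b_{\beta^{+}}(\alpha(0))$ and $b_{\beta^{-}}(x)\le b_{\alpha^{-}}(x)+b_{\beta^{-}}(\alpha(0))$; adding them and using $B_\beta(\alpha(0))=0$ gives $B_\beta\le B_\alpha$. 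I would keep the two one-sided inequalities around, since they are what later pins down the relevant constants.

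Transitivity comes next. Given $\gamma_1\prec\gamma_2$ and $\gamma_2\prec\gamma_3$, the comparison principle gives $B_{\gamma_3}\le B_{\gamma_2}$, and since $B_{\gamma_2}\equiv0$ on $\gamma_1$ (recall $\gamma_1\prec\gamma_2$ forces $\gamma_1\subseteq G_{\gamma_2}$) while $B_{\gamma_3}\ge0$, already $\gamma_1\subseteq G_{\gamma_3}$. Evaluating the two one-sided inequalities for $(\alpha,\beta)=(\gamma_2,\gamma_3)$ at $x=\gamma_1(t)$ and adding shows $0=B_{\gamma_3}(\gamma_1(t))\le B_{\gamma_2}(\gamma_1(t))+B_{\gamma_3}(\gamma_2(0))=0$, so each is an equality along $\gamma_1$; hence $b_{\gamma_3^{\pm}}(\gamma_1(t))$ and $b_{\gamma_2^{\pm}}(\gamma_1(t))$ differ by a constant. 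As $\gamma_1\prec\gamma_2$ makes $b_{\gamma_2^{+}}$ (respectively $b_{\gamma_2^{-}}$) affine of slope $-1$ (respectively $+1$) along $\gamma_1$, the same is now true of $b_{\gamma_3^{+}}$ and $b_{\gamma_3^{-}}$; by the coray lemma every translate $(\gamma_1)^{+}_{\tau}$ is then a coray to $\gamma_3^{+}$ and every $(\gamma_1)^{-}_{\tau}$ a coray to $\gamma_3^{-}$. Thus $\gamma_1$ realizes the second assertion of Theorem 3 for $\gamma_3$ at $\gamma_1(0)\in G_{\gamma_3}$, and the uniqueness there identifies it with the distinguished line, i.e. $\gamma_1\prec\gamma_3$. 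Reflexivity ($\gamma\prec\gamma$: the line $\gamma$ itself trivially realizes the second assertion of Theorem 3, since $b_{\gamma^{+}}(\gamma(s))=-s$ and $b_{\gamma^{-}}(\gamma(s))=s$) together with the obvious symmetry of $\sim$ then make $\sim$ an equivalence relation.

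For the last assertion, ``$\Rightarrow$'' is immediate from the comparison principle: $\gamma_1\sim\gamma_2$ means $\gamma_1\prec\gamma_2$ and $\gamma_2\prec\gamma_1$, whence $B_{\gamma_2}\le B_{\gamma_1}\le B_{\gamma_2}$. For ``$\Leftarrow$'', assume $\gamma_1\prec\gamma$, $\gamma_2\prec\gamma$ and $B_{\gamma_1}=B_{\gamma_2}$; by the symmetry of these hypotheses in $\gamma_1,\gamma_2$ it is enough to prove $\gamma_1\prec\gamma_2$. Since $B_{\gamma_2}=B_{\gamma_1}\equiv0$ on $\gamma_1$ we get $\gamma_1\subseteq G_{\gamma_2}$ and $b_{\gamma_2^{+}}(\gamma_1(t))=-b_{\gamma_2^{-}}(\gamma_1(t))$. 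Now apply the one-sided inequalities for $(\alpha,\beta)=(\gamma_2,\gamma)$ at $x=\gamma_1(t)$ and use $\gamma_1\prec\gamma$ (which makes $b_{\gamma^{+}},b_{\gamma^{-}}$ affine of slopes $-1,+1$ along $\gamma_1$): this produces a lower bound $b_{\gamma_2^{+}}(\gamma_1(t))\ge C_1-t$ and, via $b_{\gamma_2^{+}}(\gamma_1(t))=-b_{\gamma_2^{-}}(\gamma_1(t))$, an upper bound $b_{\gamma_2^{+}}(\gamma_1(t))\le C_2-t$, with $C_1-C_2=B_\gamma(\gamma_1(0))-B_\gamma(\gamma_2(0))=0$ because both base points lie in $G_\gamma$. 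Hence $b_{\gamma_2^{+}}$ is affine of slope $-1$ along $\gamma_1$, symmetrically $b_{\gamma_2^{-}}$ of slope $+1$, and the coray lemma together with the uniqueness in Theorem 3 give $\gamma_1\prec\gamma_2$ exactly as before.

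The one step I expect to need care — in both the transitivity argument and the converse above — is passing from the cheap conclusion ``$\gamma_1$ lies in the zero set $G$'' to the genuine conclusion ``$\gamma_1$ is the distinguished foliation line of Theorem 3'': mere containment in $G$ is insufficient, since a geodesic can cross the foliation (think of a vertical line in the flat plane with $\gamma$ horizontal). The equality-of-affine-constants computation — forced by $B\ge0$ together with $B\equiv0$ on the line under consideration — is precisely what recovers the coray property needed to invoke the uniqueness clause of Theorem 3.
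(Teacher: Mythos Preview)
Your proof is correct and follows the same overall architecture as the paper (comparison principle $\Rightarrow$ transitivity $\Rightarrow$ the equivalence), but the execution diverges in two places worth noting. First, your derivation of the one-sided inequalities $b_{\beta^{\pm}}(x)\le b_{\alpha^{\pm}}(x)+b_{\beta^{\pm}}(\alpha(0))$ is cleaner than the paper's: you use only the $1$-Lipschitz bound together with the affine behaviour of $b_{\beta^{\pm}}$ along $\alpha$ (from Lemma~0.7), whereas the paper proves its Lemma~4.1 by going back to the definition of coray via approximating minimal segments, and then separately re-derives $B_{\gamma_1}|_{\gamma_3}\equiv 0$ in the transitivity proof by another $\epsilon$-argument rather than simply invoking the comparison principle it has already established. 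Second, for the ``$\Leftarrow$'' direction under the ambient hypothesis $\gamma_1,\gamma_2\prec\gamma$, the paper (Proposition~4.7) takes a shorter indirect route: if $\gamma_1\nprec\gamma_2$, Theorem~3(2) produces a different line $\xi$ through $\gamma_1(0)$ with $\xi\prec\gamma_2$, and then transitivity gives $\xi\prec\gamma$, contradicting the uniqueness clause of Theorem~3(2) since $\gamma_1\prec\gamma$ as well. Your direct verification of the affine/coray property via the matching-constants computation is perfectly valid, but once transitivity is in hand the paper's contradiction argument avoids that computation entirely.
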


On a noncompact Riemannian manifold, one can use rays to define  a kind of  (ideal) boundary $M(\infty)$ ( see [\cite{Ba}, Section 2], \cite{Mar} for various kinds of ideal boundary). Precisely, $M(\infty)$ is the set of equivalence classes of rays, where two rays $\gamma_1$ and $\gamma_2$ are equivalent if and only if $b_{\gamma_1}-b_{\gamma_2}=const.$. Thus, for a line $\gamma_1$, we may think it as a geodesic connecting two elements (i.e. $\gamma^-_1, \gamma^+_1$) in $M(\infty)$.

\begin{theo}
For two lines $\gamma_1$ and $\gamma_2$, $\gamma_1 \sim \gamma_2$ if and only if they connect the same pair of boundary elements in $M(\infty)$.
\end{theo}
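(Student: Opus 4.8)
The plan is to convert the geometric condition ``$\gamma_1$ and $\gamma_2$ connect the same pair of ideal boundary points'' into the purely analytic statement that both $b_{\gamma_1^+}-b_{\gamma_2^+}$ and $b_{\gamma_1^-}-b_{\gamma_2^-}$ are constant on $M$, and then to play this against the coray description of the relation $\prec$ supplied by the second assertion of Theorem 3 and by the coray characterization (a ray $\sigma$ is a coray to a ray $\rho$ iff $b_\rho(\sigma(t_2))-b_\rho(\sigma(t_1))=t_1-t_2$). Two elementary facts will be used repeatedly: because $\gamma_i$ is a line, $b_{\gamma_i^+}(\gamma_i(s))=-s$ and $b_{\gamma_i^-}(\gamma_i(s))=s$ for \emph{all} $s\in\mathbb R$; and $B_{\gamma_i}\ge 0$ with $B_{\gamma_i}|_{\gamma_i}\equiv 0$, so $\min_M B_{\gamma_i}=0$ and $G_{\gamma_i}\supseteq\gamma_i$.

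For the ``if'' direction, suppose $b_{\gamma_1^\pm}=b_{\gamma_2^\pm}+c_\pm$ for constants $c_\pm$. Adding gives $B_{\gamma_1}=B_{\gamma_2}+(c_++c_-)$, and comparing minima forces $c_++c_-=0$, hence $B_{\gamma_1}=B_{\gamma_2}$ and $G_{\gamma_1}=G_{\gamma_2}$. I would then insert $b_{\gamma_2^\pm}(\gamma_1(s))=b_{\gamma_1^\pm}(\gamma_1(s))-c_\pm=\mp s-c_\pm$ into the coray characterization to check in one line that, for every $\tau$, the forward ray of $\gamma_1$ based at $\gamma_1(\tau)$ is a coray to $\gamma_2^+$ and the backward ray based at $\gamma_1(\tau)$ is a coray to $\gamma_2^-$; together with $\gamma_1\subseteq G_{\gamma_1}=G_{\gamma_2}$ this exhibits $\gamma_1$ as a line through $\gamma_1(0)$ satisfying the second assertion of Theorem 3 relative to $\gamma_2$, so by the uniqueness part it \emph{is} that line, i.e.\ $\gamma_1\prec\gamma_2$. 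Swapping $\gamma_1$ and $\gamma_2$ gives $\gamma_2\prec\gamma_1$, hence $\gamma_1\sim\gamma_2$.

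The ``only if'' direction is the substantial one. The tool is a one-sided estimate: if a ray $\sigma$ is a coray to a ray $\rho$, then $b_\sigma\ge b_\rho-b_\rho(\sigma(0))$ on $M$, obtained by writing $b_\rho(y)\le b_\rho(\sigma(t))+d(y,\sigma(t))=b_\rho(\sigma(0))-t+d(y,\sigma(t))$ and letting $t\to\infty$. Assume $\gamma_1\sim\gamma_2$. Applying this to the forward rays of $\gamma_1$ based at $\gamma_1(\tau)$, which are corays to $\gamma_2^+$ since $\gamma_1\prec\gamma_2$, and using that $b_{\gamma_2^+}$ decreases at unit speed along $\gamma_1$ (coray characterization again), the translation parameter cancels and I get $b_{\gamma_1^+}-b_{\gamma_2^+}\ge c:=-b_{\gamma_2^+}(\gamma_1(0))$ on $M$; symmetrically $b_{\gamma_1^-}-b_{\gamma_2^-}\ge c':=-b_{\gamma_2^-}(\gamma_1(0))$. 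Here $c+c'=-B_{\gamma_2}(\gamma_1(0))=0$ because $\gamma_1\subseteq G_{\gamma_2}$. Adding the two inequalities yields $B_{\gamma_1}\ge B_{\gamma_2}$, and the symmetric estimates from $\gamma_2\prec\gamma_1$ give the reverse, so $B_{\gamma_1}=B_{\gamma_2}$ (alternatively this is the last sentence of Theorem 4 applied with the common line $\gamma_2$). Then $b_{\gamma_1^+}-b_{\gamma_2^+}=-(b_{\gamma_1^-}-b_{\gamma_2^-})\le -c'=c$, which with the lower bound forces $b_{\gamma_1^+}-b_{\gamma_2^+}\equiv c$ and $b_{\gamma_1^-}-b_{\gamma_2^-}\equiv -c$. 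Hence $[\gamma_1^+]=[\gamma_2^+]$ and $[\gamma_1^-]=[\gamma_2^-]$: $\gamma_1$ and $\gamma_2$ connect the same pair of boundary elements.

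The main obstacle is exactly why the converse is not immediate: corays do not by themselves preserve the asymptotic class of a Busemann function, since a coray $\sigma$ to $\rho$ yields only the \emph{one-sided} bound $b_\sigma\ge b_\rho-b_\rho(\sigma(0))$; thus neither $\gamma_1\prec\gamma_2$ alone nor $B_{\gamma_1}=B_{\gamma_2}$ alone gives $[\gamma_1^+]=[\gamma_2^+]$ (a bounded function need not be constant). The device is to run the one-sided estimate simultaneously in the forward and backward directions and then squeeze: the identity $B_{\gamma_1}=B_{\gamma_2}$ converts the two lower bounds into matching upper bounds, while the normalization $c+c'=0$ --- which records precisely that $\gamma_1$ lies in the zero set $G_{\gamma_2}$ --- makes the resulting two-sided bounds collapse to equalities. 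The remaining manipulations (Busemann functions of translated rays, and verifying the coray conditions) are routine once the coray characterization and the elementary properties of $B_\gamma$ are in hand.
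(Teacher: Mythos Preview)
Your proposal is correct and follows essentially the same route as the paper's Proposition~5.1: both directions hinge on the one-sided estimate $b_\sigma\ge b_\rho-b_\rho(\sigma(0))$ for a coray $\sigma$ to $\rho$ (Lemma~4.1), the identity $c_++c_-=0$ coming from $B_{\gamma_2}(\gamma_1(0))=0$, and the squeeze that upgrades the two one-sided Busemann inequalities to equalities once $B_{\gamma_1}=B_{\gamma_2}$; the only cosmetic differences are that in the ``if'' direction you invoke the uniqueness clause of Theorem~3(2) directly while the paper reaches the same conclusion by contradiction via differentiability of $b_{\gamma_2^\pm}$ at $\gamma_1(0)$, and you give a short Lipschitz-based derivation of Lemma~4.1 using Lemma~0.7 rather than the paper's approximating-geodesics argument.
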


 The organization of this paper is as follows.  In section 1, we will prove that Busemann functions are viscosity solutions and show  the local semi-concavity.
 In section 2, we will
  illustrate the structure of singularity sets of Busemann functions. In section 3, Theorem 3 is proved. In section 4, we will analysis the relations $\prec$ and $\sim$ and prove Theorem 4. In section 5, we discuss the geometrical meaning of the relation $\sim$ and prove Theorem 5. In section 6, we  relate our results to a rigidity conjecture. In section 7, we propose some further discussions on the relations with ideal boundaries.

\section{Semi-concavity of Busemann functions}

 First we recall a result in \cite{MM}. For any non-empty closed subset $K$ of $M$, let $d_K(x)$ be the distance from $x$ to $K$.
 \begin{prop}[\cite{MM}, Theorem 3.1, Proposition 3.4]
 $d_K$ is locally semi-concave with linear modulus and is a viscosity solution of $|\nabla u|_g=1$ on $M\setminus K$.
 \end{prop}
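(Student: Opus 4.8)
The plan is to prove the two assertions in turn: the viscosity‑solution statement, which is elementary, and then the local semiconcavity, which contains the only genuinely technical point. Throughout one uses first that $d_K$ is $1$‑Lipschitz by the triangle inequality, hence continuous, so that all the notions involved make sense.

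\emph{Viscosity solution on $M\setminus K$.} To see that $d_K$ is a subsolution, let $\phi\in C^\infty$ be such that $d_K-\phi$ has a local maximum at $x_0\in M\setminus K$. For any unit $v\in T_{x_0}M$ and small $t>0$, the maximum inequality applied at $x_t:=\exp_{x_0}(tv)$ together with $d_K(x_t)-d_K(x_0)\ge -t$ gives $\phi(x_t)-\phi(x_0)\ge -t$; dividing by $t$ and letting $t\to0^+$ yields $\langle\nabla\phi(x_0),v\rangle\ge-1$ for every unit $v$, hence $|\nabla\phi(x_0)|_g\le1$. For the supersolution property, let $d_K-\phi$ have a local minimum at $x_0\in M\setminus K$. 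By completeness $K$ has a point $p$ nearest to $x_0$; let $\sigma$ be a unit‑speed minimizing geodesic from $x_0$ to $p$. For small $t>0$ one has $d_K(\sigma(t))=d_K(x_0)-t$ (a shorter route through $\sigma(t)$ would contradict minimality of $d(x_0,p)$), so the minimum inequality gives $\phi(x_0)-\phi(\sigma(t))\ge t$; dividing and letting $t\to0^+$ gives $-\langle\nabla\phi(x_0),\dot\sigma(0)\rangle\ge1$, hence $|\nabla\phi(x_0)|_g\ge1$ by Cauchy--Schwarz.

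\emph{Local semiconcavity.} Fix $x_0\in M\setminus K$, put $r=d_K(x_0)>0$, and choose a geodesically convex ball $B=B(x_0,\rho)$ with $\bar B$ compact and $\rho<r/3$; since faraway points of $K$ are irrelevant, on $B$ one has $d_K=\inf_{p\in\widetilde K}d(\cdot,p)$ with $\widetilde K:=K\cap\bar B(x_0,r+2\rho)$ compact. It suffices to produce, for each $x_1\in B$, a function $\psi_{x_1}$ smooth near $x_1$ with $\psi_{x_1}\ge d_K$, $\psi_{x_1}(x_1)=d_K(x_1)$, and $\mathrm{Hess}\,\psi_{x_1}\le C\,g$ near $x_1$, where $C$ does not depend on $x_1$: evaluating such upper supports along geodesics of $B$ then shows $d_K$ is semiconcave with linear modulus on $B$, and in a coordinate chart on $B$ the bounded Christoffel symbols together with $|\nabla\psi_{x_1}|_g=1$ convert the Riemannian Hessian bound into a Euclidean one, which is the assertion of the definition. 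To build $\psi_{x_1}$: let $p\in\widetilde K$ be nearest to $x_1$, let $\sigma:[0,r_1]\to M$ be a unit‑speed minimizing geodesic with $\sigma(0)=p$, $\sigma(r_1)=x_1$ and $r_1=d_K(x_1)\in[r-\rho,r+\rho]$, set $z=\sigma(r_1/2)$, and take $\psi_{x_1}(x):=\tfrac{r_1}{2}+d(x,z)$. Then $\psi_{x_1}\ge d(\cdot,p)\ge d_K$ and $\psi_{x_1}(x_1)=r_1=d_K(x_1)$ by the triangle inequality, while $d(x_1,z)=r_1/2$ is bounded below; granting that $x_1$ is not a cut point of $z$, the function $d(\cdot,z)$ is smooth near $x_1$ and the Riccati (Hessian) comparison for $d(\cdot,z)$, combined with the curvature and distance bounds available on a fixed compact ball around $x_0$, gives $\mathrm{Hess}\,d(\cdot,z)\le C\,g$ near $x_1$ with $C$ uniform.

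The \emph{main obstacle} is precisely the claim that $x_1$ is not a cut point of $z$. This follows from a standard second‑variation argument based on the fact that $\sigma|_{[0,r_1]}$ is minimizing: if $x_1$ were a cut point of $z$, then either there is a second minimizing geodesic from $z$ to $x_1$, producing a broken minimizing geodesic from $p$ to $x_1$ of length $r_1$ — impossible — or $x_1$ is conjugate to $z$ along $\sigma$, in which case a nontrivial Jacobi field vanishing at $z$ and $x_1$, extended by $0$ over $\sigma|_{[0,r_1/2]}$, is a nonzero broken field vanishing at both endpoints with index form $\le0$ along the minimizing segment $\sigma|_{[0,r_1]}$, again impossible. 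Everything else — existence of nearest points, the Hessian comparison, the passage to a chart, and the fact that uniform pointwise smooth upper supports imply semiconcavity with linear modulus — is routine once this point is settled.
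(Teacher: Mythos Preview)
The paper does not give its own proof of this proposition: it is quoted verbatim from \cite{MM} (Theorem~3.1 and Proposition~3.4 there) and used as a black box in the proof of Theorem~1. So there is nothing in the paper to compare your argument against.

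That said, your proof is correct and self-contained. The viscosity argument is the standard one, and your semiconcavity proof via uniform $C^2$ upper supports $\psi_{x_1}(x)=\tfrac{r_1}{2}+d(x,z)$ with $z$ the midpoint of a minimizing segment from a foot point to $x_1$ is exactly the construction used in \cite{MM}; the cut-point obstruction you single out is indeed the only nontrivial step, and your second-variation/index-form argument disposing of both the bifurcation and conjugate-point cases is valid (in the conjugate case, $I(\tilde J,\tilde J)=0$ on a minimizing segment forces $\tilde J$ to be a genuine Jacobi field, hence identically zero, contradicting $J\not\equiv0$). The passage from a uniform Riemannian Hessian bound on the supports to semiconcavity with linear modulus in a chart, using boundedness of the Christoffel symbols and $|\nabla\psi_{x_1}|_g=1$, is routine as you say.
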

 Based on this proposition, we could prove Theorem 1 as follows.
\begin{proof}[Proof of Theorem 1] We fix any ray $\gamma$ in $M$. Recall the  definition
$$b_{\gamma}(x)=\lim [d(x, \gamma(t))-t].$$
For any open subset $U \subset M$ with compact closure, there exists $t_0 \geq 0$ such that for any $t \geq t_0$, $\gamma(t) \cap \bar U =\emptyset$.
Thus for any $t \geq t_0$, $d(x, \gamma(t))-t$ is a viscosity solution of Hamilton-Jacobi equation
$$|\nabla u|_g=1$$
on $U$ [\cite{MM}, Theorem 3.1]. Clearly, two Hamilton-Jacobi equations $|\nabla u|_g=1$ and $|du|^2_g=1$ admit the same set of viscosity solutions. Since $d(x, \gamma(t))-t$ is Lipschitz with Lipschitz constant 1 for ant $t$, $d(x,\gamma(t))-t \rightarrow b_{\gamma}$ locally uniformly. By the stability of viscosity solution [\cite{CS}, Theorem 5.2.5], it means that $b_{\gamma}(x)$ is a viscosity solution of
$$|\nabla u|_g=1 (\text{ or equivalently } |du|^2_g=1)$$
on $U$. By the arbitrariness of $U$, $b_{\gamma}$ is a global viscosity solution of
$$|\nabla u|_g=1(\text{ or equivalently } |du|^2_g=1)$$
on $M$. Since the equation $|du|^2_g=1$ is determined by the locally uniformly convex Hamiltonian $H(x,p)=|p|^2_g$, $b_{\gamma}$, as a viscosity solution of such a  Hamilton-Jacobi equation, must be locally semi-concave with linear modulus [\cite{CS}, Theorem 5.3.6].
\end{proof}

\begin{rem}
By Theorem 1, we know that on any noncompact complete Riemannian manifold $(M,g)$, there always exists at least one viscosity solution to the Hamilton-Jacobi equation
$$|\nabla u|_g=1.$$
In fact, any Busemann function is a such one. One  may ask whether it is still true for closed Riemannian manifolds. The answer is negative.  Otherwise, there exists a viscosity solution $u$ on $M$. Since $M$ is closed, $u$ attains its minimum at some point, say $x_0$. Then, since $u$ is locally semi-concave, $u$ is differentiable at $x_0$, thus $\nabla u(x_0)=0$. It contradicts  the assumption that $u$ is a viscosity solution (Recall that a viscosity solution should satisfies the Hamilton-Jacobi equation at any differentiable point).
\end{rem}

\begin{rem}
By weak KAM theory \cite{Fa}, on a closed  Riemannian manifold  there exists a unique constant $c$ such that
$$|du|^2_g=c$$
admits a global viscosity solution. This  unique constant is characterized  to be $\alpha(0)$, here $\alpha$ is Mather's $\alpha$-function (for the definition, see \cite{Mat1}). In fact, for our case of geodesic flow, $c=0$ and all viscosity solutions of $|du|^2_g$ are constants. For non-compact cases, things are quite different. Although in \cite{FM}, it is proved that there exists a unique $c$ (in our case $c=0$) such that $$|du|^2_g=c(=0)$$ admits a global viscosity solution and for any $c^{\prime} < 0$, $$|du|^2_g=c^{\prime}$$  admits no global viscosity solution, it may happens that $$|du|^2_g=c^{\prime}$$ may admit  global viscosity solution for any $c^{\prime}>0$. Note that Theorem 1 implies that $c^{\prime}$ could be taken to be 1. In fact, for any $c^{\prime}\geq 0$, the Hamilton-Jacobi equation
$$|du|^2_g=c^{\prime}$$
always admits global viscosity solutions. To see this point, choosing any ray $\gamma$, then $\sqrt{c^{\prime}} b_{\gamma}$ would be a such solution. In a word, there may exist more than one constants $c^{\prime}$ such that $$|du|^2_g=c^{\prime}$$
admits global viscosity solutions in the non-compact case, but the constant is unique in the compact case.
\end{rem}
\begin{rem}
By weak KAM theroy, we know that global viscosity solutions are determined essentially by Aubry sets. However, in our case, the elements in $M(\infty)$, which are represented by rays, should be regarded as  analogues of the static classes of Aubry sets. This is a main motivation of this article.
\end{rem}

\section{Singularity sets of Busemann functions}
 In [\cite{In}, Theorem 2], among other results, Innami proved $C(\gamma) \subseteq sing(b_{\gamma})$. By our result on the locally semi-concavity of Busemann functions, we can improve it slightly to Theorem 2.

 Before we going into the details of the proof, we recall some definitions.  For a function $f$ of local semi-concavity (not necessary with  linear modules) and for any $x \in M$, we denote
 $$D^+_xf:=\{\nabla_x \phi : \phi \text{ is  $C^1$  and touch } f \text{ at } x \text{ from above}\}.$$
 If moreover $f$ is assumed to be local semi-concave with linear modulus, here $\phi$ could be taken to be $C^2$ functions.  By the definition of $D^+$, it is easy to see that for any two functions $f_1$ and $f_2$ of local semi-concavity with linear modulus, we have:
 $$D^+(f_1+f_2) \supseteq D^+f_1+D^+f_2.$$
\begin{proof}[Proof of Theorem 2]
Since $b_{\gamma}$ is locally semi-concave,
$$sing(b_{\gamma})=\{x:D^+_xb_{\gamma} \text{ is not a singleton}\},$$
(see for example [\cite{CS}, Proposition 3.3.4]).

Let $D^*b_{\gamma}(x)$ be the reachable gradients of $b_{\gamma}$ at $x$, i.e.
$$D^*b_{\gamma}(x)=\{p:\exists x_k \rightarrow x, b_{\gamma} \text{ is differentiable at $x_k$ and} \lim_{k \rightarrow \infty}\nabla b_{\gamma}(x_k)=p\}.$$
By  [\cite{CS}, Theorem 3.3.6], we have
$$D^+b_{\gamma}(x)=co D^{*}b_{\gamma}(x),$$
for any $x \in M$, here $co$ denotes the convex hull.

Before the  proof of Theorem 2 going on, we need a lemma.

\begin{lem}
For any $p \in D^{*}b_{\gamma}(x)$, there exists at least one coray $\gamma^{\prime}:[0, \infty) \rightarrow M$ with $\gamma^{\prime}(0)=x, \nabla
\gamma^{\prime}(0)=-p.$

\end{lem}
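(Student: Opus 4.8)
The plan is to realize the reachable gradient $p$ as a limit of actual gradients $\nabla b_\gamma(x_k)$ at points $x_k\to x$ of differentiability, pass to corays at those points, and extract a convergent subsequence whose limit is the desired coray. First I would recall the basic regularity fact that at a point $y$ where $b_\gamma$ is differentiable, there is a unique coray $\sigma_y$ to $\gamma$ initiated from $y$, and $\nabla b_\gamma(y)=-\dot\sigma_y(0)$. This is classical (it follows from Lemma 1.1 together with the first-variation formula: if $b_\gamma$ is differentiable at $y$ then all corays from $y$ must leave in the direction $-\nabla b_\gamma(y)$, since along any coray $\sigma$ one has $b_\gamma(\sigma(t))=b_\gamma(y)-t$, forcing $\dot\sigma(0)=-\nabla b_\gamma(y)$; existence of at least one coray from every point is standard). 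So pick $x_k\to x$ with $b_\gamma$ differentiable at $x_k$ and $\nabla b_\gamma(x_k)\to p$; let $\sigma_k$ be the unique coray from $x_k$, so $\dot\sigma_k(0)=-\nabla b_\gamma(x_k)$.

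Next I would extract a limiting geodesic. Since $|\nabla b_\gamma(x_k)|_g=1$ (each $x_k$ is a differentiability point and $b_\gamma$ solves $|\nabla u|_g=1$ there by Theorem 1), the initial velocities $\dot\sigma_k(0)$ live in the unit sphere bundle over a compact neighborhood of $x$; passing to a subsequence, $\dot\sigma_k(0)\to v$ with $|v|_g=1$ and, because $\nabla b_\gamma(x_k)\to p$, necessarily $v=-p$. By continuous dependence of geodesics on initial conditions, the geodesics $\sigma_k$ converge, uniformly on compact subsets of $[0,\infty)$, to the geodesic $\gamma'$ with $\gamma'(0)=x$, $\dot\gamma'(0)=v=-p$. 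It remains to check that $\gamma'$ is a coray to $\gamma$. For this I would use Lemma 1.1: each $\sigma_k$, being a coray, satisfies $b_\gamma(\sigma_k(t_2))-b_\gamma(\sigma_k(t_1))=t_1-t_2$ for all $t_1,t_2\ge 0$; since $b_\gamma$ is continuous and $\sigma_k\to\gamma'$ locally uniformly, this identity passes to the limit, giving $b_\gamma(\gamma'(t_2))-b_\gamma(\gamma'(t_1))=t_1-t_2$, and Lemma 1.1 then says $\gamma'$ is a coray to $\gamma$. Finally $\nabla\gamma'(0)=\dot\gamma'(0)=-p$, as required. (One should note that $\gamma'$ is automatically a ray, since any coray is a ray.)

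The main obstacle I anticipate is the first step — establishing cleanly that differentiability of $b_\gamma$ at a point $y$ forces the coray from $y$ to be unique with initial velocity $-\nabla b_\gamma(y)$. One direction, that every coray from $y$ has velocity $-\nabla b_\gamma(y)$, is the first-variation argument sketched above and is robust; uniqueness then follows because the velocity is pinned down. Alternatively, and perhaps more in the spirit of this paper, one can bypass pointwise coray theory entirely: $-\nabla b_\gamma(y)\in D^+_y b_\gamma=D^-_y b_\gamma$ (equality at differentiability points), and since $b_\gamma$ is a viscosity solution it is in particular a viscosity \emph{supersolution}, so the characteristic (i.e. the gradient curve, which here is a geodesic) emanating from $y$ in direction $-\nabla b_\gamma(y)$ is calibrated by $b_\gamma$ and hence, by Lemma 1.1, is a coray. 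Either route works; I would present the first-variation version since it is elementary and self-contained, and then the compactness-plus-stability argument of the second paragraph is entirely routine.
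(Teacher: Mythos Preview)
Your proposal is correct and follows essentially the same route as the paper: choose differentiability points $x_k\to x$ with $\nabla b_\gamma(x_k)\to p$, take the unique coray $\sigma_k$ from each $x_k$ with $\dot\sigma_k(0)=-\nabla b_\gamma(x_k)$, and pass to a limiting geodesic $\gamma'$ which is a coray. Your write-up is in fact more explicit than the paper's (you justify the velocity identity at differentiable points via calibration/first variation, and you verify the limit is a coray by passing the identity $b_\gamma(\sigma_k(t_2))-b_\gamma(\sigma_k(t_1))=t_1-t_2$ to the limit, whereas the paper cites Innami/Cannarsa--Sinestrari for the former and simply asserts the latter).
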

\begin{proof}[Proof of Lemma 2.1]
By definition, there exists a sequence of $x_k$ such that $b_{\gamma}$ is differentiable at $x_k$ and $x_k \rightarrow x,   \nabla b_{\gamma}(x_k) \rightarrow -p$. Since $b_{\gamma}$ is differentiable at $x_k$, there exists a unique coray
$$\gamma_k:[0, +\infty) \rightarrow M $$
to $\gamma$ with $\gamma_k(0)=x_k, \dot{\gamma_k}(0)=-\nabla b_{\gamma}(x_k)$ (by [\cite{In}, Theorem 2], since $C(\gamma) \subseteq sing(b_{\gamma})$ holds or by [\cite{CS}, Theorem 3.3.6] from the viewpoint of viscosity solutions). Then  $\gamma_k$ will convergent uniformly on any compact time interval to a ray $\gamma^{\prime}:[0,+\infty) \rightarrow M$ with $\gamma^{\prime}(0)=x$ and $\dot{\gamma^{\prime}}(0)=-p$. Clearly, $\gamma^{\prime}$ is a coray to $\gamma$.
\end{proof}

Now we continue the proof of Theorem 2. If $x \in sing(b_{\gamma}(x))$, then $D^{+}b_{\gamma}(x)$ is not a singleton, thus $D^{*}b_{\gamma}(x)$ is not a singleton as well. By Lemma 2.1, there exist at least two corays from $x$. So, $x \in C(\gamma)$.

\end{proof}

\section{Proof of Theorem 3}

 Since both $b_{\gamma^{+}}$ and $b_{\gamma^{-}}$ are locally semi-concave with linear modulus, the first assertion of Theorem 3 follows from the obvious fact that the sum of two locally semi-concave functions with linear modulus  are locally semi-concave with linear modulus as well.

Since $B_{\gamma}$ is nonnegative and  locally semi-concave,  $B_{\gamma}$ is differentiable at the points in $G_{\gamma}$ and the differential is zero. Moreover, since $b_{\gamma_+}$ and $b_{\gamma_{-}}$ are locally semi-concave with linear modulus, together with the fact $D_x^+B_{\gamma}\supseteq D_x^+b_{\gamma_+}+D_x^+b_{\gamma_-}$, we know both $b_{{\gamma}_{+}}$ and $b_{\gamma_{-}}$ should be  differentiable simultaneously at the points in $G_{\gamma}$. So there  exist exactly two corays $\gamma^{\prime}_{+}$ and $\gamma^{\prime}_{-}$ to $\gamma_{+}$ and $\gamma_{-}$ respectively initiated from $x \in G_{\gamma}$, with $\dot{{\gamma}^{\prime}_{+}}(0)=-\nabla b_{{\gamma}_{+}}(x)$ and $\dot{{\gamma}^{\prime}_{-}}(0)=-\nabla b_{{\gamma}_{-}}(x)$.

Since
\begin{eqnarray*}
&&0 = \nabla b_{\gamma^+}(x)+\nabla b_{\gamma^-}(x)\\
&\Longleftrightarrow&\dot{{\gamma}^{\prime}_{+}}(0)+\dot{{\gamma}^{\prime}_{-}}(0))=0\\
\end{eqnarray*}
we know that  $\gamma^{\prime}$ defined by
\begin{equation*}\tag{*}
\gamma^{\prime}(t)=
\begin{cases} \gamma^{\prime}_{-}(-t),  \text{ when }  t \leq 0  \\
  \gamma^{\prime}_{+}(t),   \text{ when } t \geq 0
  \end{cases}
\end{equation*}
is really a geodesic.

To show that $\gamma^{\prime}$ is a line, we need to show that
\begin{equation*}\tag{**}
d(\gamma^{\prime}(s), \gamma^{\prime}(s^{\prime}))=|s^{\prime}-s|
\end{equation*}
for any $s^{\prime} \geq s$.
If $s^{\prime} \geq s \geq 0$ or $s \leq s^{\prime} \leq 0$, then clearly $(**)$ holds, since $\gamma^{\prime}_+, \gamma^{\prime}_-$ are rays. Then the only case remained is $s^{\prime} >0 > s$.  Otherwise there exists another geodesic segment $\gamma^{*}:[s,s^{*}] \rightarrow M$ with $\gamma^*(s)=\gamma^{\prime}(s), \gamma^*(s^{*})=\gamma^{\prime}(s^{\prime})$ and $l(\gamma^*|_{[s,s^{*}]})<l(\gamma^{\prime}|_{[s,s^{\prime}]})$. Here, and in the following, $l$ means the length of a curve  induced by the Riemannian metric we considered.

Choose any $t \in (s,s^{*})$, and denote $\gamma^*(t)$ by $y$, we will show that $B_{\gamma}(y) <0$ and thus get a contradiction. In fact,
\begin{eqnarray*}
&&B_{\gamma}(y)\\
&=&b_{\gamma^+}(y)+b_{\gamma^-}(y)\\
&\leq&d(y,\gamma^*(s^{*}))+b_{\gamma^+}(\gamma^*(s^{*}))+d(y,\gamma^*(s))+b_{\gamma^-}(\gamma^*(s))\\
&<&l(\gamma^{\prime}|_{[s,s^{\prime}]})+b_{\gamma^+}(\gamma^*(s^{*}))+b_{\gamma^-}(\gamma^*(s))\\
&=&l(\gamma^{\prime}|_{[s,0]})+l(\gamma^{\prime}|_{[0,s^{\prime}]})+b_{\gamma^+}(\gamma^{\prime}(s^{\prime}))
+b_{\gamma^-}(\gamma^{\prime}(s))\\
&=& s+s^{\prime}+b_{\gamma^+}(\gamma^{\prime}(s^{\prime}))+b_{\gamma^-}(\gamma^{\prime}(s))\\
&=& b_{\gamma^+}(x)+b_{\gamma^-}(x)\\
&=&0.
\end{eqnarray*}

Now we begin to show $B_{\gamma}\gamma^{\prime} \equiv 0$.  We only prove $B_{\gamma}(\gamma^{\prime}(t)) = 0$ for $t \geq 0$ and the case $t <0$ is similar.  In fact,
\begin{eqnarray*}
&&0 \\
&\leq& B_{\gamma}(\gamma^{\prime}(t))\\
&=&b_{\gamma^{+}}(\gamma^{\prime}(t))+b_{\gamma^{-}}(\gamma^{\prime}(t))\\
&\leq& b_{\gamma^+}(\gamma^{\prime}(0))-t+ b_{\gamma^{-}}(\gamma^{\prime}(0))+t\\
&=&b_{\gamma^+}(\gamma^{\prime}(0))+b_{\gamma^-}(\gamma^{\prime}(0))\\
&=&0,
\end{eqnarray*}
where the second inequality holds because of two facts: (1). $\gamma^{\prime}_+$ is a coray to $\gamma^+$, thus by Lemma 0.7, $b_{\gamma^{+}}(\gamma^{\prime}(t))
= b_{\gamma^+}(\gamma^{\prime}(0))-t$ (recall we assume $t\geq 0$); (2) $b_{\gamma^{-}}(\gamma^{\prime}(t))
\leq b_{\gamma^{-}}(\gamma^{\prime}(0))+t$ by the Lipschitz property of $b_{\gamma^-}$. Moreover, it is also easy to see that for $\tau \in \mathbb{R}$, ${\gamma^{\prime}_{\tau}}^+$ is the unique coray to $\gamma^+$ initiated from $\gamma^{\prime}(\tau)$, and ${\gamma^{\prime}_\tau}^-$ is the unique coray to $\gamma^-$ initiated from $\gamma^{\prime}(\tau)$.

Now we see that $G_{\gamma}$ is foliated by lines like $\gamma^{\prime}$, and thus $G_{\gamma}$ forms a lamination.  So far, the second assertion follows.

The third assertion also follows from the local semi-concavity with linear modulus of $B_{\gamma}(x)$. As the first step of the proof, we provide a lemma as follows.

\begin{lem}For any $t_0 \in \mathbb{R}$, there exists a neighborhood $U$ of $\gamma(t_0)$ such that
$$B_{\gamma}(x) \leq C^{\prime}d^2(x, \gamma)$$
for $x \in U$ some constant $C^{\prime}$.
\end{lem}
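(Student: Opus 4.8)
The plan is to read the desired bound directly off the local semi-concavity with linear modulus of $B_\gamma$ (first assertion of Theorem 3), combined with the fact -- already used in the proof of the second assertion -- that $\gamma\subseteq G_\gamma$ and that $B_\gamma$ is differentiable at every point of $G_\gamma$ with vanishing differential there. The point is that a nonnegative semi-concave function which vanishes together with its differential at a point is controlled from above by a quadratic in the distance to that point; applying this with the point being a foot point of $x$ on $\gamma$ yields the claim.

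First I would fix a smooth chart $\phi:V\to\mathbb{R}^n$ with $\gamma(t_0)\in V$ and $\phi(\gamma(t_0))=0$, and an open convex set $\Omega'\Subset\phi(V)$ with $0\in\Omega'$. By Theorem 3(1), $u:=B_\gamma\circ\phi^{-1}$ is locally semi-concave with linear modulus on $\phi(V)$; let $C$ be a semi-concave constant valid on $\Omega'$. The standard characterization of semi-concavity with linear modulus (see \cite{CS}) gives, for all $a,b\in\Omega'$ and all $p\in D^+u(a)$,
\[
u(b)\le u(a)+\langle p,b-a\rangle+\frac{C}{2}\,|b-a|^2 .
\]
Since $u$ is semi-concave it is differentiable precisely where $D^+u$ is a singleton, and at a point of $\gamma$ the differential of $B_\gamma$ is $0$, hence $D^+u\big(\phi(\gamma(t))\big)=\{0\}$ for those $t$ with $\gamma(t)\in V$. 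Taking $a=\phi(\gamma(t))$, $p=0$, $u(a)=0$ in the displayed inequality gives $u(b)\le\frac{C}{2}\,|b-\phi(\gamma(t))|^2$.

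Next I would shrink to a neighborhood $U\ni\gamma(t_0)$ small enough that three conditions hold simultaneously: (i) $\phi(U)\subset\Omega'$; (ii) for each $y\in U$ the continuous function $t\mapsto d(y,\gamma(t))$ attains its infimum over $\mathbb{R}$ at some $t^\ast=t^\ast(y)$ with $\gamma(t^\ast)\in\phi^{-1}(\Omega')$; (iii) the Riemannian distance dominates the coordinate distance up to a fixed constant $c$ on $\phi^{-1}(\overline{\Omega'})$, i.e. $|\phi(z_1)-\phi(z_2)|\le c\,d(z_1,z_2)$ for $z_1,z_2$ in that set joined by a minimizing geodesic lying inside it. Condition (iii) is immediate from smoothness of $g$ and $\phi$ on the compact set $\phi^{-1}(\overline{\Omega'})$; condition (ii) is the one needing a little care. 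Because $\gamma$ is a line, $d(y,\gamma(t))\ge|t-t_0|-d(y,\gamma(t_0))$, so for $y$ near $\gamma(t_0)$ the infimum is attained on a bounded $t$-interval and, as $y\to\gamma(t_0)$, any minimizer $t^\ast(y)$ tends to $t_0$; hence after shrinking $U$ the foot point $\gamma(t^\ast(y))$ stays inside $\phi^{-1}(\Omega')$.

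Finally, for $y\in U$, applying the specialized estimate at $a=\phi(\gamma(t^\ast(y)))$ and then (iii),
\[
B_\gamma(y)=u(\phi(y))\le\frac{C}{2}\,\big|\phi(y)-\phi(\gamma(t^\ast(y)))\big|^2\le\frac{C c^2}{2}\,d\big(y,\gamma(t^\ast(y))\big)^2=\frac{C c^2}{2}\,d(y,\gamma)^2,
\]
so the lemma holds with $C'=\frac{C c^2}{2}$. I expect the only genuine obstacle to be the bookkeeping in the previous paragraph: guaranteeing that a nearest point of $x$ on $\gamma$ exists and, after a suitable shrinking of $U$, lies inside the fixed chart, so that the semi-concavity inequality may legitimately be invoked between $x$ and that foot point; once that is arranged the rest is a routine application of the semi-concavity already established.
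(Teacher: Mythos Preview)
Your proposal is correct and follows essentially the same route as the paper: work in a chart around $\gamma(t_0)$, use that $B_\gamma$ vanishes with zero differential along $\gamma$ together with the semi-concavity inequality (the paper cites \cite[Proposition 3.3.1]{CS}) to get $B_\gamma(x)\le C_1|\phi(x)-\phi(\gamma(t))|^2$, and then convert to $d^2(x,\gamma)$ via the bi-Lipschitz property of the chart after ensuring foot points of points in $U$ on $\gamma$ stay inside the chart domain. Your treatment of the foot-point localization (condition~(ii)) is in fact slightly more explicit than the paper's.
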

\begin{proof}[Proof of Lemma 3.1]In fact, since $B_{\gamma}$ is locally semi-concave with linear modulus and obtain minimum at $\gamma(t_0)$, we get $B_{\gamma}$ is differentiable at $\gamma(t_0)$ with differential zero. Choose $U$ to be a sufficiently small neighborhood of $\gamma(t_0)$, such that for any $x \in U$, the minimal geodesic segments connecting $x$ and its foot points on $\gamma$  is contained in a coordinate domain $U^{\prime}$.  Assume the associated coordinate chart is $(U^{\prime}, \phi)$. Shrinking $U$ if necessary (thus $U^{\prime}$ can be chosen  smaller accordingly), we may assume that $B_{\gamma} \circ \phi$ is semi-concave with linear modulus (not a locally one any more) on $\phi(U^{\prime})$.  Assume that the associated semi-concave constant of $B_{\gamma} \circ \phi^{-1}$ is $C_1$. Then we would get: $B_{\gamma} \circ \phi^{-1} \geq 0$ and $B_{\gamma} \circ \phi^{-1}=0$ on $\phi(\gamma(t))$.  Thus, by [\cite{CS}, Proposition 3.3.1], we get for any $x \in U$ and any $\gamma(t) \in U^{\prime}$,
\begin{eqnarray*}
&&B_{\gamma}(x)=B_{\gamma} \circ \phi^{-1}( \phi(x))\\
&=&B_{\gamma} \circ \phi^{-1}( \phi(x))-B_{\gamma} \circ \phi^{-1}( \phi(\gamma(t))) \\
&\leq&  C_1 |\phi(x)-\phi(\gamma(t))|^2\\
&\leq& C_1 |d\phi|^2_{\infty} d^2(x, \gamma(t)).
\end{eqnarray*}

Together with the choice of neighborhood $U$, the above inequality implies $$B_{\gamma}(x) \leq C_1 |d\phi|^2_{\infty}d(x, \gamma).$$
 Now let $C^{\prime}=C_1 |d\phi|^2_{\infty}$ and the lemma follows.
\end{proof}
Now we continue the proof of the third assertion of Theorem 3. By the fact that $\gamma$ is a line, the set of foot points of the compact set $K$ to $\gamma$ will be contained in a  compact segment of $\gamma$, say $\gamma|_{[t_0,t_1]}$. By the finite covering technique, together with Lemma 3.1, we get this fact:
There exist an open neighborhood $U$ of $\gamma|_{[t_0,t_1]}$ and a constant $C^{\prime}$ such that $B_{\gamma}(x) \leq C^{\prime}d^2(x, \gamma)$
for $x \in U$. Let $$K_1 (>0):= \begin{cases}   \inf_{x \in K \setminus U} d(x, \gamma)=K_1, \text{ when }K \setminus U \neq \emptyset \\
 1 ,\text{ when } K \setminus U =\emptyset
  \end{cases},$$
  $K_2 (\geq 0):=\sup_{x \in K} B_{\gamma}(x)$ and  $C=\max{\{C^{\prime}, \frac{K_2}{K^2_1}\}}$, then the assertion of Theorem 3  follows.  \qed

The following corollary can also be easily obtained.
\begin{cor}
 If $B_{\gamma}$ is differentiable at $x$, then both $b_{\gamma^+}$ and $b_{\gamma^{-}}$ are differentiable at $x$. Moreover, there exists a unique pair of  corays $\gamma^{\prime}_{+}, \gamma^{\prime}_{-}$ initiated from $x$ to $\gamma^{+}$ and $\gamma^{-}$ respectively. If moreover, $x$ is a locally minimal point of $B_{\gamma}$, then
 $$\dot{\gamma^{\prime}_{-}}(0)=-\dot{\gamma^{\prime}_{+}}(0).$$
\end{cor}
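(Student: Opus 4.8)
The plan is to prove the Corollary as a short consequence of the foliation machinery already developed in the proof of Theorem 3. First I would observe that the assertion ``$B_\gamma$ differentiable at $x$ implies both $b_{\gamma^+}$ and $b_{\gamma^-}$ differentiable at $x$'' is the local version of the reasoning used on $G_\gamma$: from the inclusion $D_x^+B_\gamma \supseteq D_x^+b_{\gamma^+}+D_x^+b_{\gamma^-}$ and the fact that all three functions are locally semi-concave with linear modulus, one argues that if $D_x^+B_\gamma$ is a singleton then each of $D_x^+b_{\gamma^\pm}$ must be a singleton. Indeed $D_x^+b_{\gamma^+}+D_x^+b_{\gamma^-}$ is a subset of the singleton $D_x^+B_\gamma$, and since $D_x^+b_{\gamma^\pm}$ are nonempty compact convex sets, a Minkowski sum of two such sets is a singleton only if each summand is a singleton. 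By the characterization $sing(u)=\{x:D_x^+u \text{ not a singleton}\}$ for locally semi-concave functions (quoted from \cite{CS} in the proof of Theorem 2), this shows $b_{\gamma^\pm}$ are differentiable at $x$.

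Next, for the existence and uniqueness of the pair of corays: differentiability of $b_{\gamma^+}$ at $x$ gives, exactly as invoked in Lemma 2.1 and in the proof of Theorem 3, a unique coray $\gamma'_+$ to $\gamma^+$ with $\dot\gamma'_+(0)=-\nabla b_{\gamma^+}(x)$, and symmetrically a unique coray $\gamma'_-$ to $\gamma^-$ with $\dot\gamma'_-(0)=-\nabla b_{\gamma^-}(x)$. Uniqueness of each follows because at a differentiability point of $b_{\gamma^\pm}$ the reachable gradient set is a singleton, so (via \cite{In}, Theorem 2, or \cite{CS}, Theorem 3.3.6) there is only one coray in that direction. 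This gives the unique pair $(\gamma'_+,\gamma'_-)$.

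For the last clause, suppose $x$ is a local minimum of $B_\gamma$. Then $B_\gamma$ being differentiable there (which also follows directly since a local minimum of a locally semi-concave function with linear modulus is automatically a differentiability point with zero differential — the argument already used at the start of the Theorem 3 proof) forces $\nabla B_\gamma(x)=0$, i.e. $\nabla b_{\gamma^+}(x)+\nabla b_{\gamma^-}(x)=0$. Translating to velocities, $\dot\gamma'_+(0)=-\nabla b_{\gamma^+}(x)$ and $\dot\gamma'_-(0)=-\nabla b_{\gamma^-}(x)$ give $\dot\gamma'_-(0)=-\dot\gamma'_+(0)$, which is exactly the claim; this is precisely the computation labelled $(*)$ in the Theorem 3 proof, now carried out at a local, rather than global, minimum of $B_\gamma$.

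The main obstacle — though a mild one — is the Minkowski-sum-singleton step: one must be careful that $D^+b_{\gamma^\pm}$ are genuinely nonempty (true for locally semi-concave functions, \cite{CS}) and compact convex (true by \cite{CS}, since they equal $\mathrm{co}\,D^*b_{\gamma^\pm}$), so that ``sum is a point $\Rightarrow$ each summand is a point'' applies; everything else is a direct transcription of steps already established in the proof of Theorem 3, localized away from $G_\gamma$.
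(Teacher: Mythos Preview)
Your proposal is correct and follows essentially the same approach as the paper: the paper does not give an explicit proof of this corollary (it only states that it ``can also be easily obtained''), and the argument you spell out --- the Minkowski-sum inclusion $D_x^+B_\gamma \supseteq D_x^+b_{\gamma^+}+D_x^+b_{\gamma^-}$ forcing each summand to be a singleton, the identification of corays via Theorem~2/Lemma~2.1, and the vanishing-gradient computation at a local minimum --- is precisely the reasoning already carried out on $G_\gamma$ in the proof of Theorem~3, transplanted to an arbitrary differentiability point. The only detail you add beyond what the paper writes explicitly is the observation that a Minkowski sum of nonempty compact convex sets is a singleton only if each summand is; this is elementary and exactly what is needed.
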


Alexandroff's theorem [\cite{CS}, Theorem 2.3.1 (i)] says that  locally semi-concave functions with linear modulus are twice differentiable almost everywhere with respect to the Lebesgue measure. With this theorem in hand, we get
\begin{cor}
Busemann functions and Barrier functions are twice differentiable almost everywhere.
\end{cor}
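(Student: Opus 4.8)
The plan is to deduce this immediately from Alexandroff's theorem, quoted just above, together with the semi-concavity results already proved. By Theorem 1 every Busemann function $b_{\gamma}$ is locally semi-concave with linear modulus, and by the first assertion of Theorem 3 every barrier function $B_{\gamma}$ is locally semi-concave with linear modulus as well. Hence it suffices to establish the general statement: any function $u:M\to\mathbb{R}$ that is locally semi-concave with linear modulus is twice differentiable almost everywhere with respect to Lebesgue measure.

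Before invoking Alexandroff's theorem I would first observe that both notions involved are coordinate-independent, so that the conclusion makes unambiguous sense on $M$ and can be checked locally. Indeed, if $\phi_1,\phi_2$ are two charts around a point $x$, then $\phi_2\circ\phi_1^{-1}$ is a diffeomorphism, so by the chain rule for second derivatives $u\circ\phi_1^{-1}$ is twice differentiable at $\phi_1(x)$ if and only if $u\circ\phi_2^{-1}$ is twice differentiable at $\phi_2(x)$; moreover a diffeomorphism between open subsets of $\mathbb{R}^n$ carries Lebesgue-null sets to Lebesgue-null sets. Thus ``the set of points at which $u$ fails to be twice differentiable is Lebesgue-null'' is a well-posed statement about $M$, exactly as the local semi-concavity hypothesis is well-posed by the Remark following Definition 0.2.

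Then, for each $x\in M$ choose, using Definition 0.2, a chart $\phi:U\to\mathbb{R}^n$ on which $u\circ\phi^{-1}$ is locally semi-concave with linear modulus on $\phi(U)$. By Alexandroff's theorem [\cite{CS}, Theorem 2.3.1 (i)], $u\circ\phi^{-1}$ is twice differentiable off a Lebesgue-null subset $N_U\subset\phi(U)$, so $u$ is twice differentiable on $U$ away from the null set $\phi^{-1}(N_U)$. Since $M$ is a (second countable) manifold, it is covered by countably many such charts $U_1,U_2,\dots$; the set of points where $u$ is not twice differentiable is contained in $\bigcup_k \phi_k^{-1}(N_{U_k})$, a countable union of null sets, hence null. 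Applying this with $u=b_{\gamma}$ and with $u=B_{\gamma}$ gives the corollary. I do not expect any genuine obstacle here: the mathematical content is entirely contained in Theorem 1, the first assertion of Theorem 3, and Alexandroff's theorem, and the only thing that needs care is the routine bookkeeping above that upgrades ``twice differentiable a.e.'' from a statement in charts to a statement on $M$ — namely the invariance of the relevant null sets under changes of coordinates and the countable covering of $M$.
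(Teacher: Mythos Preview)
Your proposal is correct and follows exactly the paper's approach: invoke Alexandroff's theorem together with the local semi-concavity established in Theorem~1 and Theorem~3(1). The paper simply states this in one sentence without spelling out the chart-independence and countable-cover bookkeeping that you make explicit, but the argument is the same.
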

\begin{rem}
In the fields of Riemannian geometry, there are some regularity results for Busemann functions, but almost all of the  results  this type need additional geometrical assumptions (e.g.  negative or positive curvature; free of conjugate points or focal points).  For general case, the regularity of local  semi-concavity with linear modulus (and thus, they are Lipschitz and twice differentiable almost everywhere) is expected to be optimal. Since in general, Busemann functions and barrier functions have no higher regularity, tools from nonsmooth analysis should come into this field essentially.
\end{rem}

\section{On the  relations $\prec$  and $\sim$}
To prove that $\prec$ is a transitive relation, we need the following fundamental  lemma (see for example [\cite{Pe}, Page 286, Proposition 41], [\cite{Mar}, Lemma 2.7]).
\begin{lem}
If $\gamma$ is a ray, and $\gamma^{\prime}$ is a coray  to $\gamma$, then $b_{\gamma}(x)-b_{\gamma}(\gamma^{\prime}(0)) \leq b_{\gamma^{\prime}}(x)$.
\end{lem}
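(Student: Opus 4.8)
The plan is to return to the definition of a coray and pass the defining triangle inequality to the limit. Write $x_0=\gamma^{\prime}(0)$, and let $t_i\to\infty$ together with minimal geodesic segments $\gamma_i:[0,T_i]\to M$ satisfying $\gamma_i(0)\to x_0$, $\gamma_i(T_i)=\gamma(t_i)$ and $\gamma_i\to\gamma^{\prime}$ uniformly on compact intervals, be the data witnessing that $\gamma^{\prime}$ is a coray to $\gamma$. Since $\gamma_i$ is minimal and unit-speed, $T_i=d(\gamma_i(0),\gamma(t_i))$; because $\gamma_i(0)\to x_0$, the triangle inequality gives $|T_i-d(x_0,\gamma(t_i))|\le d(\gamma_i(0),x_0)\to 0$, and $d(x_0,\gamma(t_i))-t_i\to b_{\gamma}(x_0)$ by the definition of the Busemann function, so $T_i-t_i\to b_{\gamma}(x_0)$. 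In particular $T_i\to\infty$.

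Now fix $s\ge 0$ and $x\in M$. For $i$ large we have $T_i>s$, and since a subsegment of a minimal geodesic is minimal, $d(\gamma_i(s),\gamma(t_i))=T_i-s$. The triangle inequality then yields
$$
d(x,\gamma(t_i))\le d(x,\gamma_i(s))+d(\gamma_i(s),\gamma(t_i))=d(x,\gamma_i(s))+T_i-s .
$$
Subtracting $t_i$ and letting $i\to\infty$: the left side tends to $b_{\gamma}(x)$ (as $t_i\to\infty$), while $\gamma_i(s)\to\gamma^{\prime}(s)$ gives $d(x,\gamma_i(s))\to d(x,\gamma^{\prime}(s))$ and $T_i-t_i\to b_{\gamma}(x_0)$, so
$$
b_{\gamma}(x)\le d(x,\gamma^{\prime}(s))-s+b_{\gamma}(x_0).
$$
Since this holds for every $s\ge 0$ and $d(x,\gamma^{\prime}(s))-s\to b_{\gamma^{\prime}}(x)$ as $s\to\infty$, we obtain $b_{\gamma}(x)-b_{\gamma}(x_0)\le b_{\gamma^{\prime}}(x)$, as desired.

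The argument is a careful limiting procedure; the only point that needs attention is the claim $T_i-t_i\to b_{\gamma}(x_0)$, i.e. that replacing the moving basepoint $\gamma_i(0)$ by $x_0$ is harmless, which is precisely where $\gamma_i(0)\to x_0$ is used. Everything else is the triangle inequality and continuity of $d$, so I do not expect a genuine obstacle. Alternatively, one could phrase the same computation using Lemma 0.7 (the coray characterization of $b_{\gamma}$) so as to avoid mentioning the $\gamma_i$ at all, but the approximation argument above seems the most transparent.
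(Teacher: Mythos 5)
Your proof is correct and follows essentially the same route as the paper's: both return to the approximating minimal segments $\gamma_i$, apply the triangle inequality through an intermediate point $\gamma_i(s)$, use minimality of the subsegment, and pass to the limit. The only cosmetic difference is that you take the limit in $i$ first for each fixed $s$ and then let $s\to\infty$, whereas the paper runs an explicit $\epsilon/3$ bookkeeping; the substance is identical.
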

This lemma plays a crucial role in our paper. For the completeness, we provide the proof here.
\begin{proof}[Proof of Lemma 4.1]
For any fixed $x \in M$ and any $\epsilon >0$, there exists $T(\epsilon)$ such that
$$d(x, \gamma^{\prime}(T))\leq T +b_{\gamma^{\prime}}(x)+\frac{\epsilon}{3}.$$
Since $\gamma^{\prime}$ is the limit of some sequences of minimal geodesic segments $\gamma_n $ which join
  $a_n \rightarrow \gamma^{\prime}(0)$ and $\gamma(t_n)$ for some sequence  $t_n \rightarrow \infty$,  for given $\epsilon$, there exists $N(\epsilon, T)$ such that for $n \geq N$, we have
$$d(a_n, \gamma^{\prime}(0)) < \frac{\epsilon}{3}$$
and
$$d(\gamma_{n}(T), \gamma^{\prime}(T))< \frac{\epsilon}{3}.$$

Thus, we obtain
\begin{eqnarray*}
&&d(x,\gamma(t_n)) \\
&\leq& d(x, \gamma_n(T))+d(\gamma_n(T), \gamma(t_n))\\
&=& d(x, \gamma_n(T))+d(a_n, \gamma(t_n))-T\\
&<& d(x,\gamma^{\prime}(T))+\frac{\epsilon}{3}+
d(\gamma^{\prime}(0),\gamma(t_n))+\frac{\epsilon}{3}-T\\
&\leq& T+b_{\gamma^{\prime}}(x)+\frac{2\epsilon}{3}+d(\gamma^{\prime}(0),\gamma(t_n))
+\frac{\epsilon}{3}-T\\
&=& \epsilon+b_{\gamma^{\prime}}(x)+d(\gamma^{\prime}(0),\gamma(t_n)),
\end{eqnarray*}
here, the first equality holds because $\gamma_n$ are minimal geodesic segments. Thus,
$$ d(x,\gamma(t_n))-d(\gamma^{\prime}(0),\gamma(t_n)) \leq \epsilon+b_{\gamma^{\prime}}(x)$$
for all $n > N(\epsilon, T)$. Passing to the limit as $\epsilon \rightarrow 0$, we obtain the assertion of the lemma.
\end{proof}

By Lemma 4.1, we can easily get
\begin{prop}
If $\gamma^{\prime} \prec \gamma$, then $B_{\gamma} \leq B_{\gamma^{\prime}}$. Consequently, if $\gamma^{\prime} \prec \gamma$ and $\gamma \prec \gamma^{\prime}$ (i.e. $\gamma \sim \gamma^{\prime}$), then $B_{\gamma} = B_{\gamma^{\prime}}$.
\end{prop}
\begin{proof}[Proof of proposition 4.2]
If $\gamma^{\prime} \prec \gamma$, then
\begin{eqnarray*}
&&B_{\gamma^{\prime}}(x)\\
&=&b_{{\gamma^{\prime}}^+}(x)+b_{{\gamma^{\prime}}^-}(x)\\
&\geq& b_{\gamma^+}(x)-b_{\gamma^+}(\gamma^{\prime}(0))+b_{\gamma^-}(x)-
b_{\gamma^-}(\gamma^{\prime}(0))\\
&=&b_{\gamma^+}(x)+b_{\gamma^-}(x)\\
&=&B_{\gamma}(x),
\end{eqnarray*}
here, the second equality holds because $0=B_{\gamma}(\gamma^{\prime}(0))=b_{\gamma^+}(\gamma^{\prime}(0))
+b_{\gamma^-}(\gamma^{\prime}(0)) $
\end{proof}

\begin{rem}
Note that for any two lines $\gamma$ and $ \gamma^{\prime}$, $B_{\gamma}=B_{\gamma^{\prime}}$ dose not imply that $\gamma^{\prime} \prec \gamma$ and/or $\gamma \prec \gamma^{\prime}$, just by recalling that on Euclidean space $\mathbb{R}^n$ for any line $\gamma, B_{\gamma} \equiv 0$.
\end{rem}

Now we show that  $\prec$  is indeed a transitive relation.
\begin{prop}
If $\gamma_2\prec \gamma_1, \gamma_3 \prec \gamma_2$, then $\gamma_3 \prec \gamma_1$.
\end{prop}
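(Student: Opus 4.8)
The plan is to unwind the definition of $\prec$ and piece together the coray relations along $\gamma_3 \to \gamma_2 \to \gamma_1$. Write $x = \gamma_3(0)$. Since $\gamma_3 \prec \gamma_2$, we have $B_{\gamma_2}(x) = 0$, $\gamma_3 \subseteq G_{\gamma_2}$, and for every $\tau$ the ray ${\gamma_3}^{\pm}_{\tau}$ is a coray to ${\gamma_2}^{\pm}$; in particular ${\gamma_3}^{+}$ is a coray to ${\gamma_2}^{+}$ and ${\gamma_3}^{-}$ is a coray to ${\gamma_2}^{-}$. Similarly, since $\gamma_2 \prec \gamma_1$, the point $\gamma_2(0)$ lies in $G_{\gamma_1}$ with the analogous coray statements, and moreover (by Proposition 4.2 applied to $\gamma_2 \prec \gamma_1$ and to $\gamma_3 \prec \gamma_2$) $B_{\gamma_1} \leq B_{\gamma_2} \leq B_{\gamma_3}$, so $B_{\gamma_1}(x) = 0$ as well. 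Thus the only substantive thing to check is that ${\gamma_3}^{+}$ (resp. ${\gamma_3}^{-}$) is a coray to ${\gamma_1}^{+}$ (resp. ${\gamma_1}^{-}$), and then that $\gamma_3$ is the \emph{unique} line through $x$ with the foliation property of Theorem 3 (2); uniqueness will follow because Corollary 3.1 already pins down, at a point where $B_{\gamma_1}$ is differentiable with value $0$, the unique pair of corays to ${\gamma_1}^{\pm}$, and any line realizing the foliation must have these as its two half-rays.

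The heart of the matter is transitivity of the coray relation \emph{when the Busemann values are compatible}, and here is where I would use Lemma 4.1 together with Lemma 0.7 (the characterization of corays via $b$). Fix $t \geq 0$. Because ${\gamma_3}^{+}$ is a coray to ${\gamma_2}^{+}$, Lemma 0.7 gives $b_{{\gamma_2}^{+}}(\gamma_3(t)) - b_{{\gamma_2}^{+}}(x) = -t$. I want the same identity with $\gamma_1$ in place of $\gamma_2$. One inequality is free from the $1$-Lipschitz property: $b_{{\gamma_1}^{+}}(\gamma_3(t)) \geq b_{{\gamma_1}^{+}}(x) - t$. For the reverse, use $B_{\gamma_1}(\gamma_3(t)) = 0$ and $B_{\gamma_1}(x) = 0$, i.e.
\begin{equation*}
b_{{\gamma_1}^{+}}(\gamma_3(t)) + b_{{\gamma_1}^{-}}(\gamma_3(t)) = 0 = b_{{\gamma_1}^{+}}(x) + b_{{\gamma_1}^{-}}(x),
\end{equation*}
combined with the lower bound $b_{{\gamma_1}^{-}}(\gamma_3(t)) \geq b_{{\gamma_1}^{-}}(x) - t$ (again $1$-Lipschitz, now along the backward direction since $d(\gamma_3(t),\gamma_3(0)) = t$). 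Subtracting shows $b_{{\gamma_1}^{+}}(\gamma_3(t)) \leq b_{{\gamma_1}^{+}}(x) - t$, hence equality. Running the same argument with the roles of $+$ and $-$ swapped, and for arbitrary parameter values $t_1, t_2 \in \mathbb{R}$ (translate $\gamma_3$ first, noting $\gamma_3(\tau) \in G_{\gamma_1}$ for all $\tau$ by the same $B_{\gamma_1} \le B_{\gamma_2}$ argument), we get $b_{{\gamma_1}^{+}}(\gamma_3(t_2)) - b_{{\gamma_1}^{+}}(\gamma_3(t_1)) = t_1 - t_2$, so by Lemma 0.7 each ${\gamma_3}_{\tau}^{+}$ is a coray to ${\gamma_1}^{+}$, and symmetrically for $-$.

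Finally I would assemble the three bullet points of $\gamma_3 \prec \gamma_1$: $B_{\gamma_1}(x) = 0$ (established above via Proposition 4.2); $\gamma_3 \subseteq G_{\gamma_1}$ and the coray conditions for all translates (just established); and uniqueness of $\gamma_3$ as the line through $x$ in $G_{\gamma_1}$ with the Theorem 3 (2) property — here note that Theorem 3 (2) itself asserts existence and uniqueness of such a line at each point of $G_{\gamma_1}$, so it suffices to observe that $\gamma_3$ \emph{is} such a line, which is exactly what the coray computation shows. I expect the main obstacle to be bookkeeping rather than conceptual: making sure the $1$-Lipschitz inequalities are applied in the correct direction (forward vs. backward half-ray) and that one genuinely has $B_{\gamma_1} \equiv 0$ along all of $\gamma_3$, not just at the basepoint, before invoking Lemma 0.7. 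No curvature or compactness input is needed anywhere.
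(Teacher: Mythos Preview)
Your shortcut for the first half is genuinely nicer than the paper's: invoking Proposition~4.2 twice gives $B_{\gamma_1}\le B_{\gamma_2}\le B_{\gamma_3}$, and since $B_{\gamma_3}$ vanishes identically along $\gamma_3$ and $B_{\gamma_1}\ge 0$, you get $B_{\gamma_1}|_{\gamma_3}\equiv 0$ for free. The paper instead redoes this from scratch with an $\epsilon/4$ argument going back to the definition of Busemann functions, which is longer and gives no more.

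However, the second half --- the coray verification --- has a real gap. You write that from $B_{\gamma_1}(\gamma_3(t))=B_{\gamma_1}(x)=0$ and the $1$-Lipschitz bound $b_{\gamma_1^-}(\gamma_3(t))\ge b_{\gamma_1^-}(x)-t$ one ``subtracts'' to obtain $b_{\gamma_1^+}(\gamma_3(t))\le b_{\gamma_1^+}(x)-t$. But the subtraction actually yields
\[
b_{\gamma_1^+}(\gamma_3(t))-b_{\gamma_1^+}(x)=-\bigl(b_{\gamma_1^-}(\gamma_3(t))-b_{\gamma_1^-}(x)\bigr)\le t,
\]
i.e.\ $b_{\gamma_1^+}(\gamma_3(t))\le b_{\gamma_1^+}(x)+t$, which is vacuous (it is just $1$-Lipschitz again). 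The sign goes the wrong way. In fact the implication ``$\gamma_3\subset G_{\gamma_1}$ and $1$-Lipschitz $\Rightarrow$ $\gamma_3^{+}$ is a coray to $\gamma_1^{+}$'' is simply false: in $\mathbb{R}^2$ with $\gamma_1$ the $x$-axis one has $B_{\gamma_1}\equiv 0$, so the $y$-axis sits entirely inside $G_{\gamma_1}$, yet its positive half is certainly not a coray to $\gamma_1^{+}$. This shows that the hypotheses $\gamma_3\prec\gamma_2$, $\gamma_2\prec\gamma_1$ must enter through more than just $B_{\gamma_1}|_{\gamma_3}\equiv 0$.

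The missing ingredient is exactly the Lemma~4.1 inequality you mention but never deploy. The paper uses it as follows: from $\gamma_2^{\pm}$ being corays to $\gamma_1^{\pm}$ one has $b_{\gamma_1^{\pm}}(z)\le b_{\gamma_1^{\pm}}(\gamma_2(0))+b_{\gamma_2^{\pm}}(z)$ for every $z$; summing and plugging in $z=\gamma_3(t)$, both sides equal $0$, forcing equality in each inequality separately. Hence $b_{\gamma_1^{+}}(\gamma_3(t))=b_{\gamma_1^{+}}(\gamma_2(0))+b_{\gamma_2^{+}}(\gamma_3(t))$, and now the known coray relation $\gamma_3^{+}\to\gamma_2^{+}$ transports to $\gamma_3^{+}\to\gamma_1^{+}$ via Lemma~0.7. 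Once you insert this step in place of the $1$-Lipschitz argument, the rest of your outline goes through.
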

\begin{proof}
First we will prove that $B_{\gamma_1}(\gamma_3)\equiv 0$. For any $\tau \in \mathbb{R}$, denote $\gamma_3(\tau)$ by $x$. Since $B_{\gamma_2}(x)=0$, for any $\epsilon >0$, there exists $T(\epsilon)>0$ such that for $t \geq T$,
$$0 \leq d(x, \gamma_2(t))+d(x, \gamma_2(-t))-2t \leq \frac{\epsilon}{4}. $$
Denote $\gamma_2(0)$ by $y$.  Since $B_{\gamma_1}(y)=0$, there exists $S(\epsilon, T)>0$ such that for $s \geq S$,
$$0 \leq d(y, \gamma_1(s))+d(y,\gamma_1(-s))-2s \leq \frac{\epsilon}{4},$$
and minimal geodesic segments $\gamma_{\pm s}$  with
$\gamma_{\pm s}(0)=y, \gamma_{\pm s}(d(y,\gamma_1({\pm s})))=\gamma_1({\pm s})$,
$$d(\gamma_{s}(T), \gamma_2(T))< \frac{\epsilon}{4} \text{ for } s \geq S,$$
$$d(\gamma_{- s}(T), \gamma_2(-T))< \frac{\epsilon}{4} \text{ for } s \geq S$$
hold. Here, since $B_{\gamma}(y)=0$, $\gamma^+_2$ and $\gamma^-_2$ are the only two corays issued from $y$ to $\gamma^+_1$ and $\gamma^+_2$ respectively, $\gamma_{\pm s}(0)$ could be fixed at $y$.
Thus, for $ s \geq S$,
\begin{eqnarray*}
&&0\\
&\leq& d(x, \gamma_1(s))+d(x,\gamma_1(-s))-2s\\
&\leq& d(x, \gamma_2(T))+d(\gamma_s(T), \gamma_1(s))+\frac{\epsilon}{4}+ d(x, \gamma_2(-T))+d(\gamma_{-s}(T), \gamma_1(-s))+\frac{\epsilon}{4}-2s\\
&\leq& 2T+\frac{\epsilon}{4}+d(y, \gamma_1(s))-d(y, \gamma_s(T))+d(y, \gamma_1(-s))-d(y, \gamma_{-s}(T))-2s+\frac{\epsilon}{2}\\
&\leq& 2T+\frac{\epsilon}{4}+2s+\frac{\epsilon}{4}-2T
-2s+\frac{\epsilon}{2}\\
&=& \epsilon.
\end{eqnarray*}
Since $\epsilon$ can be arbitrarily chosen, we get $B_{\gamma_1}(x)=0$. Since $x=\gamma_3(\tau)$ and $\tau$ can be arbitrarily chosen, we get $B_{\gamma_1}(\gamma_3)\equiv 0$.

Now will prove that that for any $\tau$, ${{\gamma_3}_{\tau}}^+$ is a coray to $\gamma^+_1$. In fact, by Lemma 4.1, for ant $t\in \mathbb{R}$,
$$b_{\gamma^+_1}({\gamma_3}_{\tau}(t)) \leq b_{\gamma^+_1}({\gamma_2}(0))+b_{\gamma^+_2}({\gamma_3}_{\tau}(t)).$$
Similarly,
$$b_{\gamma^-_1}({\gamma_3}_{\tau}(t)) \leq b_{\gamma^-_1}({\gamma_2}(0))+b_{\gamma^-_2}({\gamma_3}_{\tau}(t)).$$
By discussions above, together with facts $\gamma_2\prec \gamma_1, \gamma_3 \prec \gamma_2,$ we get
\begin{eqnarray*}
&&0\\
&=& b_{\gamma^+_1}({\gamma_3}_{\tau}(t))+b_{\gamma^-_1}({\gamma_3}_{\tau}(t))\\
&\leq& b_{\gamma^+_1}({\gamma_2}(0))+b_{\gamma^-_1}({\gamma_2}(0))\\
&&+ b_{\gamma^+_2}({\gamma_3}_{\tau}(t))+b_{\gamma^-_2}({\gamma_3}_{\tau}(t))\\
&=&0.
\end{eqnarray*}

So, we in fact get
$$b_{\gamma^+_1}({\gamma_3}_{\tau}(t)) = b_{\gamma^+_1}({\gamma_2}(0))+b_{\gamma^+_2}({\gamma_3}_{\tau}(t))$$
and
$$b_{\gamma^-_1}({\gamma_3}_{\tau}(t)) = b_{\gamma^-_1}({\gamma_2}(0))+b_{\gamma^-_2}({\gamma_3}_{\tau}(t)).$$

Thus for any $t_1, t_2 \geq 0$,
$$b_{\gamma^+_1}({\gamma_3}_{\tau}(t_2)) = b_{\gamma^+_1}({\gamma_2}(0))+b_{\gamma^+_2}({\gamma_3}_{\tau}(t_2)),$$
$$b_{\gamma^+_1}({\gamma_3}_{\tau}(t_1)) = b_{\gamma^+_1}({\gamma_2}(0))+b_{\gamma^+_2}({\gamma_3}_{\tau}(t_1)).$$

Now we have
\begin{eqnarray*}
&&b_{\gamma^+_1}({\gamma_3}_{\tau}(t_2))-b_{\gamma^+_1}({\gamma_3}_{\tau}(t_1))\\
&=&b_{\gamma^+_2}({\gamma_3}_{\tau}(t_2))-b_{\gamma^+_2}({\gamma_3}_{\tau}(t_1))\\
&=&t_1-t_2.
\end{eqnarray*}
By Lemma 0.7, it implies that ${{\gamma_3}_{\tau}}^+$ is a coray to $\gamma^+_1$ for any $\tau$. Similarly, ${{\gamma_3}_{\tau}}^-$ is a coray to $\gamma^-_1$ for any $\tau$.

So far, we have proved that $\gamma_2\prec \gamma_1, \gamma_3 \prec \gamma_2$ implies $\gamma_3 \prec \gamma_1$.

\end{proof}
\begin{rem}
Proposition 4.4 says that $\prec$ is a translative relation, and thus $\sim$ is an equivalence relation.
\end{rem}

 Barrier function in essence is the sum of two Busemann functions associated two rays which determined by a line. One may ask how about the sum of two Busemann functions associated two rays initiated from the same point in general. In fact, we have

 \begin{prop}
 Assume that $\gamma_1, \gamma_2$ are two rays with $\gamma_1(0)=\gamma_2(0)$, then  $b_{\gamma_1}+b_{\gamma_2} \geq 0$ if and only if the curve $\gamma^{\prime}$ defined by \begin{equation*}
\gamma^{\prime}(t)=
\begin{cases} \gamma_2 (-t),  \text{ when }  t \leq 0  \\
  \gamma_1(t),   \text{ when } t \geq 0
  \end{cases}
\end{equation*} is a line.
 \end{prop}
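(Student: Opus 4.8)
The plan is to prove the two implications separately. The ``if'' direction is immediate: if $\gamma'$ is a line, then by the very definition of $\gamma'$ we have ${\gamma'}^{+}=\gamma_1$ and ${\gamma'}^{-}=\gamma_2$, hence $b_{\gamma_1}+b_{\gamma_2}=b_{{\gamma'}^{+}}+b_{{\gamma'}^{-}}=B_{\gamma'}$, which is nonnegative by Lemma 0.8. So the entire content lies in the converse.

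For the converse, assume $f:=b_{\gamma_1}+b_{\gamma_2}\geq 0$ and set $x_0=\gamma_1(0)=\gamma_2(0)$. Straight from the definition of a Busemann function one reads off $b_{\gamma_i}(\gamma_i(a))=-a$ for all $a\geq 0$ (in particular $f(x_0)=0$, so $x_0$ is a global minimum of $f$). The key step is the distance identity
$$d(\gamma_1(a),\gamma_2(b))=a+b\qquad\text{for all }a,b\geq 0.$$
Here ``$\leq$'' is the triangle inequality through $x_0$ together with the fact that $\gamma_1$ and $\gamma_2$ are rays. For ``$\geq$'' I argue by contradiction as in the proof of Theorem~3: if $d(\gamma_1(a),\gamma_2(b))<a+b$ for some $a,b\geq 0$, then evaluating $f$ at $\gamma_2(b)$ and using the $1$-Lipschitz bound $b_{\gamma_1}(\gamma_2(b))\leq b_{\gamma_1}(\gamma_1(a))+d(\gamma_1(a),\gamma_2(b))=d(\gamma_1(a),\gamma_2(b))-a$ together with $b_{\gamma_2}(\gamma_2(b))=-b$ yields $f(\gamma_2(b))\leq d(\gamma_1(a),\gamma_2(b))-a-b<0$, contradicting $f\geq 0$.

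It remains to pass from the distance identity to the conclusion that $\gamma'$ is a line. For fixed $a,b\geq 0$, the unit-speed broken geodesic that runs $\gamma_1|_{[0,a]}$ in reverse and then $\gamma_2|_{[0,b]}$ joins $\gamma_1(a)$ to $\gamma_2(b)$ through $x_0$ and has length exactly $a+b=d(\gamma_1(a),\gamma_2(b))$; being a shortest curve between its endpoints it is a minimizing geodesic, hence smooth with no corner, and it coincides (up to reparametrization and reversal) with $\gamma'|_{[-b,a]}$. Letting $a,b\to\infty$ shows that $\gamma'$ is a geodesic defined on all of $\mathbb{R}$ with $d(\gamma'(s),\gamma'(s'))=|s-s'|$, i.e.\ a line. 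I do not anticipate a real obstacle; the only point deserving care -- and the closest thing to a subtlety -- is the observation that the distance identity, valid for \emph{all} $a,b\geq 0$ at once, by itself forces $\gamma'$ to be an honest geodesic, so that no separate first-variation or super-differential computation at $x_0$ is required.
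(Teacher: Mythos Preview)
Your proof is correct and follows essentially the same route as the paper. The paper phrases the converse contrapositively---``if $\gamma'$ is not a line then $d(\gamma_1(t_1),\gamma_2(t_2))<t_1+t_2$ for some $t_1,t_2>0$, and then $b_{\gamma_1}+b_{\gamma_2}$ is negative at $\gamma_1(t_1)$''---which is exactly your contradiction argument evaluated at the symmetric point, while you additionally spell out the standard passage from the distance identity to ``$\gamma'$ is a line'' that the paper packages into the clause ``i.e.\ there exist $t_1,t_2>0$\ldots''.
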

 \begin{proof}[Proof of Proposition 4.6]
The direction ``$\Leftarrow$'' is obvious.

Now we prove the other direction. Otherwise,  $\gamma^{\prime}$ is not a line, i.e. there exist $t_1,t_2>0$ such that
$$d(\gamma_1(t_1), \gamma_2(t_2))< t_1+t_2.$$
Then $$t_1+t_2>d(\gamma_1(t_1), \gamma_2(t_2)) \geq b_{\gamma_2}(\gamma_1(t_1))-b_{\gamma_2}(\gamma_2(t_2))=b_{\gamma_2}(\gamma_1(t_1))+t_2.$$
Namely, $$t_1> b_{\gamma_2}(\gamma_1(t_1)).$$
Since $b_{\gamma_1}(\gamma_1(t_1))=-t_1$, we obtain
$$b_{\gamma_1}(\gamma_1(t_1))+b_{\gamma_2}(\gamma_1(t_1)) < t_1+(-t_1)=0,$$
this contradicts the assumption.
 \end{proof}
 For two lines $\gamma_1$ and $\gamma$ with $\gamma_1 \prec \gamma$, we denote

$$S_{\gamma_1, \gamma}=\left\{ \gamma^{\prime} \text{ are lines} : \gamma^{\prime} \prec \gamma \text{ and } B_{\gamma^{\prime}}=B_{\gamma_1} \right\}.$$

Then we have the following proposition (comparing with Remark 4.3).
\begin{prop}
We fix a line $\gamma$. For other two lines $\gamma_1$ and $\gamma_2$ with $\gamma_1 \prec \gamma, \gamma_2 \prec \gamma$,
 $B_{\gamma_1}\geq B_{\gamma_2}$ implies $\gamma_1 \prec \gamma_2$. In this case (i.e.  $\gamma_1 \prec \gamma, \gamma_2 \prec \gamma$), we have $\gamma_1\sim \gamma_2$ if and only if  $B_{\gamma_1}=B_{\gamma_2}$.
\end{prop}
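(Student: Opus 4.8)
The plan is to prove the first implication, $B_{\gamma_1}\geq B_{\gamma_2}\Rightarrow\gamma_1\prec\gamma_2$, and then deduce the stated equivalence as a formal consequence. Once the implication is in hand, the ``only if'' half of the equivalence is exactly Proposition 4.2 ($\gamma_1\sim\gamma_2$ gives $B_{\gamma_1}=B_{\gamma_2}$), while the ``if'' half follows by applying the implication twice: $B_{\gamma_1}=B_{\gamma_2}$ yields both $B_{\gamma_1}\geq B_{\gamma_2}$ and $B_{\gamma_2}\geq B_{\gamma_1}$, hence $\gamma_1\prec\gamma_2$ and $\gamma_2\prec\gamma_1$, i.e.\ $\gamma_1\sim\gamma_2$ (the standing hypotheses $\gamma_1\prec\gamma$, $\gamma_2\prec\gamma$ are symmetric in $\gamma_1,\gamma_2$). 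So essentially all the work is in the implication.

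First I would record that $\gamma_1\subseteq G_{\gamma_2}$. Since $B_{\gamma_1}$ vanishes identically on $\gamma_1$ (Lemma 0.8(2)) and $B_{\gamma_2}\geq 0$ everywhere, the hypothesis $B_{\gamma_1}\geq B_{\gamma_2}$ forces $0=B_{\gamma_1}(\gamma_1(s))\geq B_{\gamma_2}(\gamma_1(s))\geq 0$, so $B_{\gamma_2}(\gamma_1(s))=0$ for all $s$, i.e.\ $\gamma_1\subseteq G_{\gamma_2}$. By the definition of $\prec$ and the uniqueness clause in the second assertion of Theorem 3 (applied to $\gamma_2$), it then suffices to verify that $\gamma_1$ satisfies the coray conditions of that assertion, namely that ${\gamma_1}^+_\tau$ is a coray to $\gamma_2^+$ and ${\gamma_1}^-_\tau$ is a coray to $\gamma_2^-$ for every $\tau$; for then $\gamma_1$ is the unique leaf line of $G_{\gamma_2}$ through $\gamma_1(0)$, which is precisely the assertion $\gamma_1\prec\gamma_2$.

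The main step, and the point where the hypotheses are really used, is to pin down the growth rate of $b_{\gamma_2^{\pm}}$ along $\gamma_1$. I would work with the one-variable functions $f^{\pm}(s):=b_{\gamma^{\pm}}(\gamma_1(s))$ and $g^{\pm}(s):=b_{\gamma_2^{\pm}}(\gamma_1(s))$. From $\gamma_1\prec\gamma$ and Lemma 0.7, $b_{\gamma^+}$ decreases at unit rate along every translate of $\gamma_1^+$, so $f^+(s)=f^+(0)-s$; combined with $f^+(s)+f^-(s)=B_{\gamma}(\gamma_1(s))=0$ this gives $f^-(s)=-f^+(0)+s$. Next, because $\gamma_2\prec\gamma$ makes $\gamma_2^{\pm}$ a coray to $\gamma^{\pm}$, Lemma 4.1 (with basepoint $y:=\gamma_2(0)$) yields the lower bounds $g^+(s)\geq f^+(s)-b_{\gamma^+}(y)=C_+-s$ and $g^-(s)\geq f^-(s)-b_{\gamma^-}(y)=C_-+s$, where $C_+:=f^+(0)-b_{\gamma^+}(y)$, $C_-:=-f^+(0)-b_{\gamma^-}(y)$, and $C_++C_-=-B_{\gamma}(y)=0$ since $y\in G_{\gamma}$. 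Finally I would sandwich: as $\gamma_1\subseteq G_{\gamma_2}$ gives $g^+(s)+g^-(s)=0$ and $C_-=-C_+$, the bound on $g^-$ rewrites as $g^+(s)\leq C_+-s$, forcing $g^+(s)=C_+-s$ and likewise $g^-(s)=C_-+s$. Thus $b_{\gamma_2^+}$ (resp.\ $b_{\gamma_2^-}$) decreases (resp.\ increases) at exactly unit rate along $\gamma_1$ and all its translates, so by Lemma 0.7 the required coray conditions hold and the implication is proved.

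The hardest part is this sandwiching argument, and its delicacy lies in noticing that the three ingredients fit together exactly: Lemma 4.1 supplies only the ``$\geq$'' bounds on $g^{\pm}$, the constancy of $B_{\gamma_2}$ along $\gamma_1$ converts the bound on $g^-$ into the reverse ``$\leq$'' bound on $g^+$, and the constancy $B_{\gamma}(y)=0$ is precisely what makes the two additive constants cancel ($C_++C_-=0$) so that the opposite inequalities close up to equalities. Everything else is bookkeeping with Lemma 0.7, Proposition 4.2, and the definition of $\prec$.
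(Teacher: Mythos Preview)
Your argument is correct, but it takes a different route from the paper's. After the common first observation that $B_{\gamma_1}\geq B_{\gamma_2}$ forces $B_{\gamma_2}(\gamma_1)\equiv 0$, the paper argues by contradiction using transitivity (Proposition~4.4): if $\gamma_1\nprec\gamma_2$, then the unique leaf line $\xi$ of $G_{\gamma_2}$ through $\gamma_1(0)$ guaranteed by Theorem~3(2) is distinct from $\gamma_1$; since $\xi\prec\gamma_2\prec\gamma$ gives $\xi\prec\gamma$, one obtains two distinct lines $\xi$ and $\gamma_1$ through $\gamma_1(0)$ both satisfying $\prec\gamma$, contradicting the uniqueness in Theorem~3(2) applied to $\gamma$. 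Your approach instead verifies directly, via the sandwich $g^+(s)\geq C_+-s$ (from Lemma~4.1) and $g^+(s)=-g^-(s)\leq C_+-s$ (from $\gamma_1\subseteq G_{\gamma_2}$ and $C_++C_-=0$), that $b_{\gamma_2^{\pm}}$ varies linearly at unit rate along $\gamma_1$, so the coray conditions hold by Lemma~0.7. The paper's proof is shorter and leverages the already-established transitivity; your proof is more self-contained (it does not invoke Proposition~4.4) and makes explicit \emph{why} the coray conditions hold, essentially reproving a special case of transitivity by hand. Both derive the equivalence in the same way, from the implication together with Proposition~4.2.
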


\begin{proof}[Proof of Proposition 4.7]
 If $B_{\gamma_1}\geq B_{\gamma_2}$, then we get $B_{\gamma_2}(\gamma_1) \equiv 0$. If $\gamma_1\nprec \gamma_2$, then by the second assertion of Theorem 3, there exists another line $\xi$, such that $\xi(0)=\gamma_1(0)$ and $\xi \prec \gamma_2$. By the transitivity of $\prec$, $\xi \prec \gamma$. Thus, there are two lines $\xi$ and $\gamma_1$ such that $\xi \prec \gamma$ and $\gamma_1 \prec \gamma$, it will contradict the uniqueness property in the second assertion of Theorem 3.
\end{proof}

Combing Proposition 4.4 and Proposition 4.7, we complete the proof of Theorem 4.

Proposition 4.7 could be strengthened to
\begin{prop}
For two lines $\gamma_1$ and  $\gamma_2$ with $\gamma_1 \prec \gamma, \gamma_2 \prec \gamma$, if there exists a $t_0 \in \mathbb{R}$ such that
 $B_{\gamma_1}(\gamma_2(t_0))=0$, then $\gamma_2 \prec \gamma_1$.
\end{prop}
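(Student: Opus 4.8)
The plan is to identify two \emph{a priori} distinct leaves of the lamination $G_{\gamma}$ passing through a common point, and then use the uniqueness clause in the second assertion of Theorem 3 to force them to coincide; this mirrors the argument for Proposition 4.7, except that the uniqueness is now exploited at an interior point of $\gamma_2$ rather than at $\gamma_1(0)$.

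Set $x:=\gamma_2(t_0)$. The hypothesis $B_{\gamma_1}(x)=0$ says exactly that $x\in G_{\gamma_1}$, so the second assertion of Theorem 3, applied with the line $\gamma_1$ in place of $\gamma$, produces a (unique) line $\xi$ with $\xi(0)=x$, $\xi\subseteq G_{\gamma_1}$, and ${\xi_\tau}^{+}$ a coray to $\gamma_1^{+}$, ${\xi_\tau}^{-}$ a coray to $\gamma_1^{-}$ for every $\tau\in\mathbb{R}$; that is, $\xi\prec\gamma_1$. By Proposition 4.4 (transitivity of $\prec$), from $\xi\prec\gamma_1$ and $\gamma_1\prec\gamma$ we get $\xi\prec\gamma$. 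On the other hand, $\gamma_2\prec\gamma$ gives $B_{\gamma}|_{\gamma_2}\equiv 0$, so after the time translation by $t_0$ the line $(\gamma_2)_{t_0}$ also satisfies $(\gamma_2)_{t_0}\prec\gamma$, and $(\gamma_2)_{t_0}(0)=\gamma_2(t_0)=x$. (That $\prec$ is invariant under time translation is immediate: if $\gamma'\prec\gamma$ then $B_{\gamma}|_{\gamma'}\equiv 0$, so every point of $\gamma'$ lies in $G_{\gamma}$, and the coray conditions defining the leaf through that point are themselves translation invariant; the uniqueness in Theorem 3 then identifies the leaf through $\gamma'(\tau)$ with $(\gamma')_{\tau}$.)

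Thus both $\xi$ and $(\gamma_2)_{t_0}$ are lines through the point $x\in G_{\gamma}$ satisfying the conditions that characterize the leaf of $G_{\gamma}$ through $x$ in the second assertion of Theorem 3; by the uniqueness there, $\xi=(\gamma_2)_{t_0}$. Since $\xi\prec\gamma_1$, this yields $(\gamma_2)_{t_0}\prec\gamma_1$, and translating back by $-t_0$ we conclude $\gamma_2\prec\gamma_1$, as claimed. The only slightly delicate point in writing this out carefully is the bookkeeping around the time-translation invariance of $\prec$ and the precise matching of the two leaves at the common point $x$; beyond that, the argument is a direct appeal to Theorem 3 and Proposition 4.4, and no new geometric estimates are required.
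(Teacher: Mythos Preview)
Your argument is correct and is essentially the same as the paper's: both produce the leaf $\xi$ of $G_{\gamma_1}$ through $x=\gamma_2(t_0)$, push it to $\xi\prec\gamma$ by transitivity, and then invoke the uniqueness clause of Theorem~3(2) in $G_{\gamma}$ to identify $\xi$ with $(\gamma_2)_{t_0}$. The only cosmetic difference is that the paper phrases this as a contradiction (assuming $\gamma_2\nprec\gamma_1$ forces $\xi\neq(\gamma_2)_{t_0}$), whereas you write the identification directly.
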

\begin{proof}[Proof of Proposition 4.8]
Otherwise, there exists another line $\xi$ with $\xi(0)=\gamma_2(t_0)$ such that
$\xi \prec \gamma_1$. Also by the transitivity of $\prec$,  there exists two lines $\xi$ and $\gamma_{2_{t_0}} $ through $\gamma_2(t_0)$ with  both  $\xi \prec \gamma$ and $\gamma_{2_{t_0}} \prec \gamma$. It will also contradict the uniqueness property in the second assertion of Theorem 3.
\end{proof}

\section{Geometric meaning of the relation $\sim$}

  For a line $\gamma_1$, we may think it as a geodesic connecting two elements (i.e. $\gamma^-, \gamma^+$ ) in $M(\infty)$. For any other line $\gamma_2$,  $\gamma_2$ connects the same two boundary elements as $\gamma_1$ if and only if  $$b_{\gamma^{\pm}_1}=b_{\gamma^{\pm}_2}+const..$$ Given two lines $\gamma_1$ and $\gamma_2$, the following proposition shows that  relation $ \gamma_1 \sim \gamma_2$ implies nothing but that $\gamma_1$ and $\gamma_2$ connect the same pair of elements in $M(\infty)$.

 \begin{prop}
 For two lines $\gamma_1$ and $\gamma_2$, $b_{\gamma^{\pm}_1}=b_{\gamma^{\pm}_2}+const. $ hold if and only $\gamma_1 \sim \gamma_2$.
 \end{prop}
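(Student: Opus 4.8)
The plan is to prove the two implications separately, using the barrier functions as the link; recall that, in the language of $M(\infty)$, the condition $b_{\gamma^{\pm}_1}=b_{\gamma^{\pm}_2}+const.$ says precisely that $\gamma_1$ and $\gamma_2$ join the same two ideal endpoints, so this Proposition is exactly the content of Theorem 5. Both directions rest on Lemma 4.1, Proposition 4.2, and the second assertion of Theorem 3.

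For the implication $\gamma_1\sim\gamma_2\Rightarrow b_{\gamma^{\pm}_1}=b_{\gamma^{\pm}_2}+const.$, I would first apply Proposition 4.2 to get $B_{\gamma_1}=B_{\gamma_2}$. Since $\gamma_1\prec\gamma_2$, the point $\gamma_1(0)$ lies in $G_{\gamma_2}$, so $b_{\gamma_2^+}(\gamma_1(0))+b_{\gamma_2^-}(\gamma_1(0))=0$, and the rays $\gamma_1^+$, $\gamma_1^-$ are corays to $\gamma_2^+$, $\gamma_2^-$ respectively. Lemma 4.1 then gives $b_{\gamma_1^{\pm}}(x)\ge b_{\gamma_2^{\pm}}(x)-b_{\gamma_2^{\pm}}(\gamma_1(0))$ for every $x$; adding the $+$ and $-$ inequalities and using the vanishing above yields $B_{\gamma_1}(x)\ge B_{\gamma_2}(x)$, which is an equality by the first step. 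A sum of two inequalities can be an equality only if each one is, so both become equalities, i.e. $b_{\gamma_1^{\pm}}=b_{\gamma_2^{\pm}}-b_{\gamma_2^{\pm}}(\gamma_1(0))$, the two constants summing to zero. Note this is exactly the place where one must use more than $B_{\gamma_1}=B_{\gamma_2}$, which by Remark 4.3 is insufficient: the coray structure behind $\gamma_1\prec\gamma_2$, fed into Lemma 4.1, is what splits the $+$ and $-$ contributions.

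For the converse, suppose $b_{\gamma_1^{\pm}}=b_{\gamma_2^{\pm}}+c^{\pm}$. Then $B_{\gamma_1}=B_{\gamma_2}+c^++c^-$, and since both barrier functions are nonnegative with infimum $0$ (Lemma 0.8), $c^++c^-=0$, hence $B_{\gamma_1}=B_{\gamma_2}$; in particular $B_{\gamma_2}(\gamma_1(0))=B_{\gamma_1}(\gamma_1(0))=0$, so $\gamma_1(0)\in G_{\gamma_2}$. Let $\gamma'$ be the unique line through $\gamma_1(0)$ furnished by the second assertion of Theorem 3 applied to $\gamma_2$; its proof gives $\dot{\gamma'}(0)=-\nabla b_{\gamma_2^+}(\gamma_1(0))$. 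On the other hand $\gamma_1(0)\in G_{\gamma_1}$, so $B_{\gamma_1}$, and therefore $b_{\gamma_1^+}$ (Corollary 3.2), is differentiable at $\gamma_1(0)$; since $\gamma_1^+$ is a coray to itself from $\gamma_1(0)$, this forces $\nabla b_{\gamma_1^+}(\gamma_1(0))=-\dot{\gamma_1}(0)$, and as $b_{\gamma_1^+}-b_{\gamma_2^+}=c^+$ is constant the two gradients agree at $\gamma_1(0)$. Hence $\dot{\gamma'}(0)=\dot{\gamma_1}(0)$, and a geodesic being determined by its initial point and velocity, $\gamma'=\gamma_1$; that is, $\gamma_1\prec\gamma_2$. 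By the symmetry of the hypothesis $\gamma_2\prec\gamma_1$ as well, so $\gamma_1\sim\gamma_2$.

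The only genuinely delicate point I foresee is this last identification $\gamma'=\gamma_1$, which hinges on the differentiability of the one-sided Busemann functions at points of the zero set $G$ (Corollary 3.2): this is what lets one recover the initial velocity of the distinguished foliation line of Theorem 3 from the datum $b_{\gamma_1^+}-b_{\gamma_2^+}=const.$. Everything else is routine bookkeeping with Lemma 4.1, Proposition 4.2, and Theorem 3, and I do not anticipate any further obstruction.
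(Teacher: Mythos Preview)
Your proof is correct and follows essentially the same approach as the paper's: both directions hinge on Lemma~4.1, the equality $B_{\gamma_1}=B_{\gamma_2}$, and the differentiability of $b_{\gamma_2^{\pm}}$ at $\gamma_1(0)$ coming from $\gamma_1(0)\in G_{\gamma_2}$. The only cosmetic difference is in the converse direction: the paper shows via Lemma~0.7 that $\gamma_1^{\pm}$ are corays to $\gamma_2^{\pm}$ and then argues by contradiction (a second line $\xi\prec\gamma_2$ through $\gamma_1(0)$ would give two distinct corays, contradicting differentiability), whereas you identify $\gamma'=\gamma_1$ directly by matching the gradient $-\nabla b_{\gamma_2^+}(\gamma_1(0))$ with $\dot\gamma_1(0)$; these are two phrasings of the same uniqueness argument.
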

 \begin{proof}[Proof of Proposition 5.1]
$\Longrightarrow).$ Assume that $b_{\gamma^{+}_1}=b_{\gamma^{+}_2}+c_1$ and $b_{\gamma^{-}_1}=b_{\gamma^{-}_2}+c_2$ for two constants $c_1,c_2$, then we get $B_{\gamma_1}=B_{\gamma_2}+c_1+c_2.$ By $B_{\gamma_1}(\gamma_1(0)) =0$ and $B_{\gamma_2}(\gamma_1(0)) \geq 0$, we get $c_1+c_2 \leq 0$.  Analogously, by $B_{\gamma_2}(\gamma_2(0)) =0$ and $B_{\gamma_1}(\gamma_2(0)) \geq 0$, we get $c_1+c_2 \geq 0$.  Thus, we get $c_1+c_2=0$ and consequently  $B_{\gamma_1}=B_{\gamma_2}$. So, we get $B_{\gamma_2}(\gamma_1(0))=0$.  As a consequence, $B_{\gamma_2}, b_{\gamma^+_2},b_{\gamma^-_2}$ are differentiable at $\gamma_1(0)$.  Also by the assumption, together with Lemma 0.7, $\gamma^+_1$ is a coray to $\gamma^+_2$ and $\gamma^-_1$ is a coray to $\gamma^-_2$.  If $\gamma_1 \nprec \gamma_2$, then there exists another line $\xi$ with $\xi(0)=\gamma_1(0)$ such that $\xi \prec \gamma_2$, (by Theorem 3. 2)). Thus we get both $\xi^+ $ and $\gamma_1^+$ are corays to $\gamma_2^+$, and both $\xi^-$ and $\gamma_1^-$ are corays to $\gamma_2^-$.  It  impossible since it contradicts the differentiability of $b_{\gamma^+_2}$ and $b_{\gamma^-_2}$  at $\gamma_1(0)$. So we have $\gamma_1 \prec \gamma_2$. The proof of  $\gamma_2 \prec \gamma_1$ is similar. Thus, we obtain $\gamma_1 \sim \gamma_2$.

$\Longleftarrow).$  By Lemma 4.1, for any $x \in M$
$$b_{\gamma^+_1}(x) \geq b_{\gamma^+_2}(x) -b_{\gamma^+_2}(\gamma_1(0))$$
and
$$b_{\gamma^-_1}(x) \geq b_{\gamma^-_2}(x) -b_{\gamma^-_2}(\gamma_1(0)).$$
So, we obtain
\begin{eqnarray*}
&&B_{\gamma_1}(x)\\ &\geq& b_{\gamma^+_2}(x) -b_{\gamma^+_2}(\gamma_1(0)) + b_{\gamma^-_2}(x) -b_{\gamma^-_2}(\gamma_1(0))\\
&=& B_{\gamma_2}(x)-B_{\gamma_2}(\gamma_1(0))\\
&=&B_{\gamma_2}(x)\\
&=&B_{\gamma_1}(x).
\end{eqnarray*}
Thus, we get two equalities
$$b_{\gamma^+_1}(x) = b_{\gamma^+_2}(x) -b_{\gamma^+_2}(\gamma_1(0)), \,\,\,  b_{\gamma^-_1}(x) =b_{\gamma^-_2}(x) -b_{\gamma^-_2}(\gamma_1(0)).$$
 \end{proof}

Theorem 5 is just a restatement of Proposition 5.1.

\begin{rem} As we said in Remark 1.4, in some sense we could regard the elements in $M(\infty)$ as some kind of analogy of static classes of Aubry sets  in  positive definite Lagrangian systems (for Mather theory, we refer to \cite{Mat1}, \cite{Mat2}, \cite{Fa}). Hence, motivated by the study of connecting orbits in positive definite Lagrangian systems \cite{Mat2}, to construct connecting geodesics (i.e. a geodesic $\gamma: \mathbb{R} \rightarrow M$ such that $\gamma|[T, \infty)$ and $-\gamma|[T, \infty)$ are two rays for sufficiently large $T>0$, but $\gamma$ is maybe not  a line) should be very interesting.
\end{rem}

\begin{rem}
For any fixed line $\gamma$, we may reparameterize all lines  $\gamma^{\prime}$ with $\gamma^{\prime} \prec \gamma$ such that $\gamma^{\prime}(0) \in b^{-1}_{\gamma^+}(0)$. Then for any two lines $\gamma_1,\gamma_2$ with $\gamma_1 \prec \gamma, \gamma_2 \prec \gamma$,  we can define $d_{\gamma}(\gamma_1, \gamma_2)=B_{\gamma_1}(\gamma_2(0))+B_{\gamma_2}(\gamma_1(0))$. By Proposition 4.8, we obtain $d_{\gamma}(\gamma_1,\gamma_2)=0$ if and only if $\gamma_1 \sim \gamma_2$. But it is not clear to us  whether $d_{\gamma}$ is a pseudo-metric on the set of lines $\gamma^{\prime}$ with  $\gamma^{\prime} \prec \gamma$.
\end{rem}

\section{Rigidity conjectures}
In the field of differential geometry, there is a well known rigidity conjecture due to G. Knieper, who stated it in a conference at MSRI in 1991. The conjecture also appeared in  literature (e.g. {\cite{BK}, \cite{Cr}, \cite{BE2}}).
\begin{conj}[Knieper]
On a complete Riemannian plane $(\mathbb{R}^2,g)$, if for every geodesic $\gamma$ and any point $x$ outside $\gamma$, there exists a unique geodesic through $x$ does not intersect $\gamma$, then $g$ is flat.
\end{conj}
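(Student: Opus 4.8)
The plan is to translate the hypothesis into the language of barrier functions and then to collapse the whole plane onto a single ``flat'' leaf space. First I would note that, on the simply connected surface $\mathbb{R}^2$, every complete geodesic is automatically a line: a non-minimizing geodesic $\gamma$ would admit parameters $t_1<t_2$ with $d(\gamma(t_1),\gamma(t_2))<t_2-t_1$, and joining those endpoints by a minimizing geodesic and comparing with $\gamma$ would exhibit, through a suitable point, two distinct geodesics both missing a third geodesic, contradicting uniqueness. Thus the barrier function $B_\gamma$ and Theorem 3 are available for \emph{every} geodesic of $(\mathbb{R}^2,g)$.

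\textbf{Forcing $B_\gamma\equiv 0$.} Fix a line $\gamma$ and a point $x\notin\gamma$. By Lemma 2.1 the reachable gradients of $b_{\gamma^{+}}$ at $x$ are realized by corays, so if $b_{\gamma^{+}}$ were non-differentiable at $x$ there would be two distinct corays to $\gamma^{+}$ issuing from $x$; prolonging each of them to a complete geodesic and invoking the uniqueness hypothesis, one checks that both prolongations must be parallels to $\gamma$ through $x$, a contradiction. Hence $b_{\gamma^{+}}$, and symmetrically $b_{\gamma^{-}}$, is everywhere differentiable, so by Corollary 3.2 the point $x$ carries a unique pair of corays $\gamma'_{+},\gamma'_{-}$ to $\gamma^{+},\gamma^{-}$; prolonging each gives a geodesic through $x$ disjoint from $\gamma$, so the two prolongations coincide and $\dot\gamma'_{-}(0)=-\dot\gamma'_{+}(0)$, i.e. $x\in G_\gamma$. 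Therefore $G_\gamma=\mathbb{R}^2$, $B_\gamma\equiv 0$, $b_{\gamma^{-}}=-b_{\gamma^{+}}$, and by the second assertion of Theorem 3 together with Theorem 4 the plane is foliated by mutually $\sim$-equivalent lines $\gamma'\prec\gamma$ — precisely the integral foliation of the globally defined $C^{1}$ unit vector field $\nabla b_{\gamma^{+}}$.

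\textbf{From parallel foliations to flatness.} Finally I would let $\gamma$ range over a two-parameter family of lines. For two transverse asymptotic directions one expects $(b_{\gamma_1^{+}},b_{\gamma_2^{+}})$ to be a global coordinate system; each $b_{\gamma_i^{+}}$ solves $|\nabla b_{\gamma_i^{+}}|_g=1$, is semi-concave with linear modulus, and has a smooth geodesic gradient-line foliation, and the aim is to show that the \emph{level curves} of these Busemann functions are themselves geodesics, for then the two transverse geodesic nets put $g$ into the form $dx^2+dy^2$ and $g$ is flat. The main obstacle is exactly this last passage: the barrier-function bookkeeping above is essentially formal, but promoting ``$\mathbb{R}^2$ is foliated by parallels in every direction'' to actual flatness requires genuinely new input — higher regularity of the Busemann functions, or a Gauss--Bonnet / total-curvature argument — and it is this step, together with the purely synthetic exclusion of non-minimizing complete geodesics in the first step, that keeps the statement a conjecture rather than a theorem.
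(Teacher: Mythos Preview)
The statement you are addressing is Conjecture 6.1 in the paper, and the paper offers no proof of it whatsoever: it is recorded precisely as an open problem, attributed to Knieper, and then generalized to Conjecture 6.2. There is therefore no ``paper's own proof'' to compare your proposal against. What you have written is not a proof either, but an outline of a strategy, and you yourself flag the terminal step as the genuine obstruction; that honesty is appropriate.

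That said, two earlier steps in your outline are shakier than you indicate. First, the claim that the parallel axiom forces every complete geodesic to be a line is not a one-line consequence of ``joining those endpoints by a minimizing geodesic'': having a shortcut between $\gamma(t_1)$ and $\gamma(t_2)$ does not by itself manufacture two distinct parallels to some third geodesic through a common point. What one actually needs here is that the hypothesis implies the absence of conjugate points (so that two geodesics meet at most once and the exponential map is a diffeomorphism), and that argument is itself nontrivial. Second, and more seriously, in your ``Forcing $B_\gamma\equiv 0$'' paragraph you repeatedly assert that the full geodesic extension of a coray to $\gamma^{+}$ from $x$ is \emph{disjoint} from $\gamma$, so that it must be the unique parallel through $x$. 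You write ``one checks'' this, but nothing in the paper's machinery (Lemma 0.7, Lemma 2.1, Theorem 3, Corollary 3.2) says that a coray, or its backward prolongation, avoids $\gamma$; a coray is asymptotic to $\gamma^{+}$ at $+\infty$, which is a statement about the Busemann function along it, not about disjointness. Without that, neither the differentiability of $b_{\gamma^{\pm}}$ nor the identification of the two corays with the unique parallel follows. So even before the final ``foliations in all directions $\Rightarrow$ flat'' step that you correctly identify as the crux, the reduction to $B_\gamma\equiv 0$ already has a real gap.
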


Clearly, the conjecture cannot generalize to higher dimensions directly, since even in the Euclidean space $\mathbb{R}^n (n \geq 3)$, for every geodesic (i.e. straight line) $\gamma$ and any point $x \notin \gamma$, there exist infinitely many geodesics through $p$ do not intersect $\gamma$.

However, it seems plausible to  generalize this conjecture to higher dimensions as follows:
\begin{conj}
On a complete Riemannian  $n$-plane $(\mathbb{R}^n,g)$, if every geodesic is a line and $S_{\gamma,\gamma} $ (i.e. the set of $\gamma^{\prime}$ with $\gamma^{\prime} \sim \gamma$) foliates $\mathbb{R}^n$ for any line $\gamma$, then $g$ is flat.
\end{conj}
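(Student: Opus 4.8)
The plan is to first turn the foliation hypothesis into a rigid analytic statement about Busemann functions, then to upgrade their regularity, and finally to run a Hopf–type integral–geometric argument to force the curvature to vanish. \textbf{Step 1 (reduction to $B_\gamma\equiv 0$).} Suppose $S_{\gamma,\gamma}$ foliates $\mathbb{R}^n$. Through any $x$ passes a line $\gamma'$ with $\gamma'\sim\gamma$; by Theorem 4 (cf.\ Proposition 4.2) $B_{\gamma'}=B_{\gamma}$, while $B_{\gamma'}(x)=0$ because $x$ lies on $\gamma'$ and $B_{\gamma'}$ vanishes identically on its own line. Hence $B_{\gamma}(x)=0$ for all $x$, i.e.\ $b_{\gamma^+}=-b_{\gamma^-}$ on all of $\mathbb{R}^n$, for \emph{every} line $\gamma$. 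Since $(\mathbb{R}^n,g)$ is complete and every geodesic is a line, for each $x$ and each unit $v\in T_xM$ there is such a $\gamma$ with $\dot\gamma(0)=v$, so this holds for a full $2n$–parameter family of Busemann functions.

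\textbf{Step 2 (regularity and geometry of the leaves).} Fix a line $\gamma$. By Theorem 1, $b_{\gamma^+}$ is locally semi-concave with linear modulus; by Step 1 so is $-b_{\gamma^+}=b_{\gamma^-}$, hence $b_{\gamma^+}$ is simultaneously locally semi-concave and locally semi-convex with linear modulus, so $b_{\gamma^+}\in C^{1,1}_{\mathrm{loc}}(\mathbb{R}^n)$, is differentiable everywhere with $|\nabla b_{\gamma^+}|_g\equiv 1$, and is twice differentiable a.e.\ (Corollary 3.3). The locally Lipschitz unit field $V:=-\nabla b_{\gamma^+}$ has as integral curves exactly the leaves $\gamma'\prec\gamma$ of the given foliation (apply Corollary 3.2, valid at every point since $G_\gamma=\mathbb{R}^n$), and these are geodesics, so $\nabla_V V=0$. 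The horospheres $b_{\gamma^+}^{-1}(c)$ are $C^{1,1}$ hypersurfaces foliating $\mathbb{R}^n$, with a.e.\ shape operator $S=\operatorname{Hess} b_{\gamma^+}$, locally bounded; and because $b_{\gamma^+}=-b_{\gamma^-}$, the stable and unstable horospheres through each point coincide. Finally, since every geodesic is a line, $(\mathbb{R}^n,g)$ has no conjugate points, so $\exp_x$ is a diffeomorphism for every $x$.

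\textbf{Step 3 (rigidity).} The situation now reached — a complete, simply connected $(\mathbb{R}^n,g)$ without conjugate points whose every geodesic carries coinciding stable and unstable $C^{1,1}$ Busemann functions — is the ``flat profile'' in Hopf–type rigidity, and I would try to conclude as follows. Along each leaf the a.e.\ shape operator solves the Riccati equation $S'+S^2+R_V=0$, where $R_V(\cdot)=R(\cdot,V)V$; tracing gives, a.e.,
\[
\operatorname{Ric}(V,V)=-\tfrac{d}{dt}\operatorname{tr}(S)-\operatorname{tr}(S^2),\qquad \operatorname{tr}(S^2)\ \ge\ \tfrac{1}{n-1}\,(\operatorname{tr} S)^2\ \ge\ 0 .
\]
Integrating over regions exhausting $\mathbb{R}^n$ and swept by the foliation, the coarea formula turns the leafwise integral of $\tfrac{d}{dt}\operatorname{tr}(S)$ into flux terms over the boundary controlled by the local $C^{1,1}$ bound on $S$; combined with the reverse estimate (a Burago–Ivanov–type choice of auxiliary tensor, using that the stable and unstable operators coincide here) and with the average over all line directions $\gamma$ (every unit direction occurs), this should force $\operatorname{tr}(S^2)\equiv 0$, hence $S\equiv 0$, hence $\operatorname{Hess} b_{\gamma^+}\equiv 0$ and $R_V\equiv 0$ for every unit $V$. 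Thus $g$ is flat; taking $n$ lines $\gamma_1,\dots,\gamma_n$ whose gradients are orthonormal at the origin, the fields $\nabla b_{\gamma_i^+}$ are then parallel and $x\mapsto(b_{\gamma_1^+}(x),\dots,b_{\gamma_n^+}(x))$ is a Riemannian isometry onto $\mathbb{R}^n$.

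\textbf{Main obstacle.} Step 3 is precisely where the difficulty lies, and this is why the statement is posed only as a conjecture: with no compactness, the flux terms over the boundaries of the exhausting regions need not vanish and must be shown to cancel in the limit; the Riccati equation holds only almost everywhere, so one needs Alexandrov–type second–order analysis of the merely $C^{1,1}$ Busemann functions; and the averaging over the sphere of directions must be reconciled with the integration. An alternative and perhaps cleaner route would be to reduce the problem to a Burago–Ivanov–type flatness theorem for $\mathbb{R}^n$ without conjugate points, with the foliation hypothesis (equivalently, $B_\gamma\equiv 0$ for every line $\gamma$) supplying the asymptotic control that replaces compactness; but isolating and proving that statement is itself a substantial undertaking. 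Steps 1 and 2 are, by contrast, straightforward consequences of the results of this paper.
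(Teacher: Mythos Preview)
The statement you are attempting to prove is labeled \emph{Conjecture} in the paper, and the paper provides no proof whatsoever: it is put forward as an open problem, offered as a possible higher-dimensional analogue of Knieper's conjecture (Conjecture 6.1). There is therefore nothing in the paper to compare your proposal against.

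That said, your Steps 1 and 2 are sound and are indeed straightforward consequences of the paper's results (Theorem 3(2), Theorem 4/Proposition 4.2, Theorem 1, Corollary 3.2), and you are right that the hypothesis forces $B_\gamma\equiv 0$ and hence $b_{\gamma^+}=-b_{\gamma^-}\in C^{1,1}_{\mathrm{loc}}$ for every line $\gamma$. Your Step 3, however, is not a proof but a heuristic outline: the Riccati/Hopf argument you sketch relies on controlling boundary flux terms on a noncompact manifold and on an averaging procedure that you do not carry out, and you yourself flag these as the ``main obstacle.'' This is precisely why the statement remains a conjecture. Your write-up is best read as a plausible strategy and a reformulation of the problem (reduce to: $B_\gamma\equiv 0$ for all lines $\gamma$ on a complete $(\mathbb{R}^n,g)$ without conjugate points $\Rightarrow$ flat), not as a proof.
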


\section{Further discussions}
On a  noncompact Riemannian metric $(M,g)$, there are other two \emph{generalized} Busemann functions, defined as follows.

Let ${x_n}$ be a sequence of points in $M$ such that $d(y,x_n) \rightarrow \infty$ for some fixed point $y$ (hence for any other fixed point in $M$) and  $$h(x):= \lim[d(x,x_n)-d(y,x_n)]$$ exists in the compact-open topology. Such a limit function will be called horofunction. More general, let ${K_n}$ be a sequence of closed subsets in $M$ such that $d(y,K_n) \rightarrow \infty$ for some fixed point $y$ (hence for any other fixed point in $M$) and  $$h(x):= \lim[d(x,K_n)-d(y,K_n)]$$ exists in the compact-open topology. Such a limit function will be called $dl$ (distance like)-function.

\begin{rem}
Clearly, any Busemann function is a horofunction and any horofunction is a $dl$-function, but the converses are not true any more. For example, in Remark 0.6, the function $-|x|$ is a $dl$-function, but not a Busemann function. [\cite{Mar}, Example 1.6] provides an example where a function is $dl$-function but not a horofunction.
\end{rem}

By the same argument  as Busemann functions, we can obtain the following result.

\begin{cor}
All horofunctions and $dl$-functions are viscosity solutions with respect to the Hamilton-Jacobi equation $|\nabla u |_g=1$ and thus locally semi-concave with linear modulus.
\end{cor}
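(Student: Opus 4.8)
The plan is to run, almost word for word, the proof of Theorem~1, replacing the ``exit point'' $\gamma(t)$ of the ray by the point $x_n$ (for a horofunction) or by the closed set $K_n$ (for a $dl$-function), and replacing the normalizing subtraction $-t$ by $-d(y,x_n)$ (respectively $-d(y,K_n)$). The only ingredients needed are Proposition~1.1, the elementary fact that subtracting a constant from a function does not affect whether it solves $|\nabla u|_g=1$ in the viscosity sense, the uniform $1$-Lipschitz bound coming from the triangle inequality, the stability of viscosity solutions under locally uniform limits, and the local uniform convexity of the Hamiltonian $H(x,p)=|p|_g^2$.

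Concretely, for a horofunction $h(x)=\lim_n\bigl[d(x,x_n)-d(y,x_n)\bigr]$ I would fix $n$ and observe that $x\mapsto d(x,x_n)-d(y,x_n)$ differs from $d_{\{x_n\}}$ by an additive constant, hence is a viscosity solution of $|\nabla u|_g=1$ on $M\setminus\{x_n\}$ by Proposition~1.1. Given any open $U\Subset M$, the hypothesis $d(y,x_n)\to\infty$ forces $x_n\notin\bar U$ for all large $n$, so for those $n$ the function $d(\cdot,x_n)-d(y,x_n)$ is a viscosity solution of $|\nabla u|_g=1$ on $U$; these functions are $1$-Lipschitz and, by the compact-open convergence built into the definition of $h$, converge to $h$ uniformly on $\bar U$. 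The stability theorem [\cite{CS}, Theorem~5.2.5] then yields that $h$ is a viscosity solution on $U$, and arbitrariness of $U$ upgrades this to a global viscosity solution on $M$. For a $dl$-function $h(x)=\lim_n\bigl[d(x,K_n)-d(y,K_n)\bigr]$ the steps are identical, with $d_{K_n}$ in place of $d_{\{x_n\}}$ and with $K_n\cap\bar U=\emptyset$ for large $n$ (again because $d(y,K_n)\to\infty$) in place of $x_n\notin\bar U$. Once $h$ is known to be a global viscosity solution of $|du|_g^2=1$, its local semi-concavity with linear modulus follows exactly as in Theorem~1, from the local uniform convexity of $H(x,p)=|p|_g^2$ via [\cite{CS}, Theorem~5.3.6].

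There is essentially no obstacle here beyond the bookkeeping above; the one point that deserves an explicit line of justification is that the limit in the definition of a horofunction or $dl$-function is taken in the compact-open topology, which — together with the uniform $1$-Lipschitz bound — is precisely the hypothesis under which the stability theorem for viscosity solutions applies, so no separate equicontinuity or Arzel\`a--Ascoli argument is needed.
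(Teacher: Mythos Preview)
Your proposal is correct and is exactly what the paper intends: the paper does not give a separate proof of this corollary but simply states ``By the same argument as Busemann functions, we can obtain the following result,'' i.e., rerun the proof of Theorem~1 with $\gamma(t)$ replaced by $x_n$ or $K_n$ and the normalization $-t$ replaced by $-d(y,x_n)$ or $-d(y,K_n)$. You have filled in precisely those substitutions and the same stability/convexity citations, so there is nothing to add.
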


As it is  explained explicitly in \cite{Mar}, we could also use $$\{\text{horofunctions}\}/\{\text{constant functions}\}$$ and $$\{dl\text{-functions}\}/\{\text{constant functions}\}$$ to define other kinds of ideal boundary. From the view point of Mather theory, the one $M(\infty)$, defined by rays (or equivalently by Busemann functions) is more reasonable, although in Gromov's theory (e.g. [\cite{Gr}, 1.2]), the one defined by horofunctions is more suitable for the name ``ideal boundary''.

One could pose such an interesting problem:

\begin{prob}
Whether any viscosity solution to the Hamilton-Jacobi equation
$$|\nabla u|_g=1$$  must be a $dl$-function?
\end{prob}

\begin{ack}
We would like to thank Professor G. Knieper for some remarks on the conjecture 6.1.
\end{ack}

\end{document}